\documentclass[12pt]{amsart}

\usepackage{latexsym}
\usepackage{amsfonts}
\usepackage{amsmath}
\usepackage{amsthm}
\usepackage{amssymb}
\usepackage{enumerate}
\usepackage{color}
\usepackage{float}
\usepackage{tikz-cd}
\usepackage{mathrsfs}
\usepackage{graphicx}
\usepackage{array}
\usepackage{adjustbox}
\usepackage{extarrows}
\usepackage{enumerate}
\usepackage{caption}

\theoremstyle{plain}
\newtheorem{theorem}{Theorem}
\newtheorem{prop}[theorem]{Proposition}
\newtheorem{lemma}[theorem]{Lemma}

\newtheorem{rem}[theorem]{Remark}

\theoremstyle{plain}

\theoremstyle{definition}
\newtheorem{definition}[theorem]{Definition}

\newcommand\R{\mathbb{R}}

\newcommand\C{\mathbb{C}}

\newcommand\Z{\mathbb{Z}}

\newcommand\w{\wedge}
\newcommand\cc{\check}

\newcommand\scalemath[2]{\scalebox{#1}{\mbox{\ensuremath{\displaystyle #2}}}}

\hoffset = -14mm
\textwidth = 158mm

\sloppy

\title{SYZ mirror symmetry of solvmanifolds}
\thanks{This work was supported by GNSAGA of INdAM}
\address{Dipartimento di Ingegneria e Scienze dell'Informazione e Matematica \\ Universit\`a dell'Aquila\\
	via Vetoio\\ 67100 L'Aquila\\ Italy}
\email{lucio.bedulli@univaq.it}
\email{alessandro.vannini@graduate.univaq.it}

\author{Lucio Bedulli and Alessandro Vannini}

\begin{document}
\maketitle

\begin{abstract}
	We present an effective construction of non-K\"ahler supersymmetric mirror pairs in the sense of \cite{LTY} starting from left-invariant affine structures on Lie groups.
	Applying this construction we explicitly find SYZ mirror symmetric partners of all known compact 6-dimensional completely solvable solvmanifolds that admit a semi-flat type IIA structure.
\end{abstract}

\section{Introduction}

The Strominger-Yau-Zaslow (SYZ) conjecture (see \cite{SYZ}) tries to describe mirror symmetry of Calabi-Yau manifolds in terms of dual Lagrangian torus fibrations.

In this paper we deal with a non-K\"ahler version of SYZ mirror symmetry where the correspondence between symplectic and complex structures of the partners is made explicit through Fourier-Mukai transform. The role of the Dolbeault cohomology is replaced by the Bott-Chern cohomology on the complex side and by a refined version of the Tseng-Yau cohomology on the symplectic side. The procedure is thouroghly explained in \cite{LTY}.
Here we will mainly stick to the case of manifolds of real dimension 6, which is the ambient where originally mirror symmetry made its appearance.

One of the aims of the present paper is to show that, unlike in the K\"ahler case, it is possible to find many interesting mirror pairs of compact 6-manifolds without the need of singularities in the fibrations.
The first and only example of this kind known so far is the nilmanifold featured in \cite{LTY}.

Nilmanifolds and more generally solvmanifolds are of course a natural ambient to look at in the seek of such structures. For example there are plenty of explicit symplectic structures on non-abelian nilmanifolds and none of these can be K\"ahler.

Moreover all 6-dimensional solvable Lie algebras admitting symplectic half-flat structures are classified (see \cite{FMOU}).

Compact quotients of the corresponding simply connected Lie groups are the best known explicit examples of compact manifolds carrying a type IIA structure.

While reinterpreting the example given in \cite{LTY}, the aim of the present paper is to explicitly find SYZ mirror symmetric partners (in the sense of \cite{LTY}) of all known compact 6-dimensional symplectic half-flat completely solvable solvmanifolds that admit a semi-flat structure.

The starting observation is that all such examples are indeed quotients of Lie groups having a particular structure of semi-direct product.
This semi-direct product structure is intimately related to a left invariant affine structure on 3-dimensional solvable Lie groups, hence to a Lagrangian torus bundle on suitable quotients of it.

It is this semi-direct product structure that allows us to explicitly find the non-singular (i.e. semi-flat) dual torus fibrations, hence the mirror partner.
This is explained in section \ref{construction}. \\

The main result is summarized in the following

\begin{theorem}
	
Let $(X,\omega,\Omega)$ be a compact solvmanifold endowed with a semi-flat left-invariant IIA structure.
Then its SYZ mirror partner $(\cc X,\cc \omega,\cc \Omega)$ is a solvmanifold  endowed with a semi-flat left-invariant IIB structure.\\
In table \ref{mirrortable} all  the corresponding pairs of Lie algebras as well as the Bott-Chern and refined Tseng-Yau numbers are listed.
\end{theorem}

In table \ref{mirrortable} we also write down the Tseng-Yau and Bott-Chern numbers of the algebras involved that realize the mirror symmetric {\em non-K\"ahler Hodge diamonds}.

As an upshot of our constructions we find a compact type IIA manifold $X$ admitting two inequivalent Lagrangian torus fibrations giving rise to two non-isomorphic semi-flat mirror pairs: the complex IIB partners $\cc X$ and $\cc X'$ are not even diffeomorphic (see \ref{twistedR3IIB} and \ref{untwistedH3IIB}).

As a by-product we also find a new balanced metric on a nilmanifold which is missing from the classification given in \cite{LUV}, see subsection \ref{twistedH3IIB}, Remark \ref{remUgarte}.

A systematic study of all the semi-flat ${\rm SU}(3)$-mirror pairs coming from left-invariant affine structures using the classification of \cite{FG} is carried over in a forthcoming paper.\\

Several related results should be mentioned.
Firstly in \cite{Nguiffo} invariant symplectic structures on $T^*G$ are constructed on a Lie group $G$ carrying an invariant affine structure. The analogous construction of invariant complex structures for $TG$ can be found in \cite{BarberisDotti}.


Moreover in \cite{CLP} the authors list all the pairs of {\em nilpotent} 6-dimensional Lie algebras constructed via dual semi-direct product and show that they have isomorphic differential Gerstenhaber algebras realizing a sort of algebraic {\em weak} mirror symmetry. 

%
%
%

\bigskip

{\bf Acknowledgments}. The authors would like to thank L. Ugarte and A. Raffero for some comments and remarks that helped to improve the presentation. Moreover the second named author would like to thank D. Angella and M. Garcia-Fernandez for useful discussions and the interest shown in this work.  The authors would also like to thank M. L. Barberis and J. Lauret for pointing out appropriate references.
	
\section{Preliminaries} 
\subsection{Affine structures and dual torus bundles}
\label{affine}
For details about affine structures see \cite{Goldmanbook, Thurston} , here we will just recall the notions relevant to our construction.

An affine structure on a $n$-manifold is an atlas whose transition functions are restrictions of affine maps.

Any affine structure on a $n$-manifold $M$ defines a {\em developing map} $D: \widetilde M \to \R^n$, where $\widetilde M \to M$
 is the universal covering, and a holonomy representation $h: \pi_1(M) \to {\rm Aff}(\R^n) $ (for the precise definition see \cite{Thurston} or \cite{Goldmanbook}).\\
The affine structure is said to be {\em complete} if the developing map is a homeomorphism.\\
Viceversa starting from a pair $(D,h)$ where $h: \pi_1(M) \to  {\rm Aff}(\R^n)$ is a homomorphism and $D:  \widetilde M \to \R^n$ is a homeomorphism equivariant with respect to the $\pi_1(M)$ and $h(\pi_1(M))$ actions one can recover a unique complete affine structure on $M$ such that $D$ is the induced developing map and $h$ is the induced holonomy representation.

An affine structure is {\em integral} if the linear part of transition functions is integral (i.e. takes values in ${\rm GL}(n,\Z)$). 
Any integral affine structure $\mathcal A$ on a manifold $B$ defines a Lagrangian bundle $X \to B$ over $B$ in the following way. 
Let $r_1,\ldots,r_n$ be local affine coordinates on $U \subseteq B$.
Then for every $q \in U$ we can define $\Lambda^*_q$ to be the integral lattice of $T^*_qB$ generated by $dr_1,\ldots,dr_n$. This definition does not depend on the choice of the local affine chart. The manifold $T^*B/\Lambda^*$ will be an $n$-torus bundle over $B$. 
Furthermore the canonical symplectic structure of $T^*B$ passes to the quotient and 
the fibers are indeed Lagrangian with respect to it. 

Viceversa every Lagrangian torus bundle $X \to B$ induces an integral affine structure on $B$.  Over the affine coordinate charts the torus bundle is locally isomorphic to one of the form $T^*B/\Lambda^* \to B$: this is a consequence of the famous {\em Arnol'd-Liouville Theorem} in classical mechanics which in particular establishes the existence of the so-called {\em action-angle} coordinates. We remark here that the action coordinates are exactly the coordinates associated to the developing map of the integral affine structure.

Again for details about Lagrangian torus bundles we refer to the classical paper of Duistermaat \cite{Duistermaat}. See also \cite[section 3]{CastanoMatessi} for a good presentation of this topic.

Given any Lagrangian torus bundle $X \to B$ together with its action-angle coordinates $ r_1,\ldots,r_n, \theta_1,\ldots,\theta_n$ we can define the
{\em dual torus bundle} $\cc X \to B$ simply by dualizing the transition functions.  
 Locally this is isomorphic to the torus bundle $TB / \Lambda \to B$ obtained by considering the fiberwise lattice $\Lambda \subset TB$ locally generated by $\frac{\partial}{\partial r_1},\ldots,\frac{\partial}{\partial r_n}$. If we denote by $\cc\theta_k$ the fiber coordinates corresponding to the action coordinates $r_k$, we get local complex coordinates $z_k=\cc\theta_k + i r_k$ on $TB/\Lambda$ hence on $\cc X$. With respect to this complex structure the fibers of $\cc X \to B$ are totally real. 

A symplectic manifold $X$ together with a Lagrangian torus fibration and its complex dual $\cc X$ are said to form a {\em semi-flat mirror pair} in \cite{LTY}.

\subsection{Non-K\"ahler SYZ mirror symmetry}
Here we briefly describe the non K\"ahler version of SYZ mirror symmetry as presented in \cite{LTY}. 
First recall that an ${\rm SU}(n)$-structure is determined on a real $2n$-manifold $X$ by a pair of differential forms $(\omega,\Omega)$, where 
\begin{enumerate}
	\item $\Omega$ is a decomposable complex $n$-form such that setting $$T^{0,1}X=\{v \in TX\otimes C: \iota_v\Omega=0\}$$ and $T^{1,0}X=\overline{T^{0,1}X}$ we have a splitting $$TX \otimes \C = T^{1,0}X \oplus T^{0,1}X$$ inducing an almost complex structure $J$.
	\item $\omega$ is a positive $(1,1)$-form with respect to $J$ satisfying the normalization condition $$\Omega \wedge \bar\Omega=\frac{(-2i)^n}{n!}\omega^n\,.$$
\end{enumerate}  
It is easy to prove that the almost complex structure $J$ defined by $\Omega$ is integrable if and only if $\Omega$ is closed.

In the 3-dimensional case we have the following
\begin{definition} An ${\rm SU}(3)$-manifold $(X,\omega,\Omega)$ is said to be supersymmetric of type IIA if
$d\omega = 0$ and $d \, \rm{Re} \, \Omega = 0$.
\end{definition}
\begin{definition} An ${\rm SU}(3)$-manifold $(X,\omega,\Omega)$ is said to be supersymmetric of type IIB if $d(\omega^2) = 0$ and $d\Omega = 0$.
\end{definition}
Note that type IIB manifolds are {\em balanced} complex manifolds with holomorphically trivial canonical bundle while type IIA manifolds are often called symplectic {\em half-flat} manifolds. 
%

Let $\pi:(X,\omega)\rightarrow B$ be a Lagrangian torus bundle
 and let $\check\pi:(\check X,\cc\Omega)\rightarrow B$ be its dual so that $(X,\omega)$ and $(\cc X,\cc\Omega)$ form a semi-flat mirror pair.

We denote by $\mathcal{A}^k_{B}(X,\mathbb{C})$ the space of complex-valued $k$-forms on $X$ which depend only on the base, also called {\em semi-flat forms}. An element $\phi\in\mathcal{A}^k_B(X,\mathbb{C})$ is locally written as

$$\phi=\sum_{I,J}a_{IJ}(r) d\theta_I\w dr_J$$

where $I=(i_1,\dots,i_p)$, $J=(j_1,\dots,j_q)$ are multi-indices and $p+q=k$,
$(r_i,\theta_i)$ are action-angle coordinates and $a_{IJ}(r)$ are complex-valued functions on $B$. 


%


Analogously we will denote by $\mathcal{A}^{p,q}_B(\check X)$ the semi-flat $(p,q)$-forms on the SYZ-dual $\check X$ which are locally written as: 

$$\check\phi=\sum_{I,J}a_{IJ}(r) dz_I\w d\bar z_J$$

In the 3-dimensional case of we can refine the previous definition of SU$(3)$-structures:

\begin{definition}An $SU(3)$-manifold $(X,\omega,\Omega)$ is said to be semi-flat  supersymmetric of type IIA if $d\omega=0$ and $d\, \rm{Re}\,\Omega = 0$ and both $\omega$ and $\Omega$ are in $\mathcal{A}_B^\bullet(X,\mathbb{C})$ 
\end{definition}
\begin{definition} An $SU(3)$-manifold $(X,\omega,\Omega)$ is said to be semi-flat  supersymmetric of type IIB if $d(\omega^2) = 0$ and $d\Omega = 0$ and both $\omega$ and $\Omega$ are in $\mathcal{A}_B^\bullet(X,\mathbb{C})$.
\end{definition}


We further define the {\em polarization switch operator $\mathcal{P}$} on $\mathcal{A}_B^{\bullet,\bullet}(\check X)$
in the following way:

%
%

$$\check\phi=\sum_{I,J}a_{IJ}(r) dz_I\w d\bar z_J \, \mapsto \, \mathcal{P}\cdot\check\phi=\sum_{I,J}a_{IJ}(r) d\check\theta_I\w dr_J\,.$$

\subsection{The Fourier-Mukai transform}
Let $(X,\omega)$ and $(\cc X,\cc\Omega)$ be a semi-flat mirror pair on a $n$-dimensional base $B$. Consider their fiber product over $B$:

\begin{center}
	\begin{tikzcd}[column sep=scriptsize]
		& X\times_B\check X \arrow[dl,"p"'] \arrow[dr,"\check p"] \\
		X \arrow[dr,"\pi"'] 
		& & \check X \arrow[dl,"\check\pi"] \\
		& B 
	\end{tikzcd}
\end{center}

On the Poincar\'e line bundle over $X\times_B\check X$ there is a universal connection which locally is written as $d+i\big(\check\theta_kd\theta_k+\theta_kd\check\theta_k\big)$. Its curvature form is

\begin{equation}
	F=2i\sum_i^3 d\check\theta_i\wedge d\theta_i \,.
\end{equation}

Let $\phi\in\mathcal{A}^\bullet_B(X)$ and $\check\phi\in\mathcal{A}_B^\bullet(\check X)$. Their {\em Fourier-Mukai transforms} are defined as

\begin{equation}
	\begin{split}
		&\text{FT}\cdot \check\phi:=p_\ast\Big((\check p^\ast(\mathcal{P}\cdot\check\phi))\wedge\exp{\frac{F}{2i}}\Big)\\
		&\text{FT}\cdot \phi:=\mathcal{P}^{-1}\cdot\Big(\check p_\ast\big((p^\ast\phi)\wedge\exp{\frac{-F}{2i}}\big)\Big)\,,\\
	\end{split}
\end{equation}

where the pushforward maps $p_\ast,\check p_\ast$ are just the integration along the fibers.

The main results by Lau, Tseng, and Yau exploiting the Fourier-Mukai transform involve a refined version of the symplectic cohomology developed by Tseng and Yau in \cite{TY1,TY2}.

Let $\Delta \subset TM$ be the Lagrangian distribution coming from the Lagrangian bundle structure of $(X,\omega)\to B$. In the presence of a metric we have also its orthogonal $\Delta^\perp$. This choice allow us to decompose the space of differential forms:

$$\mathcal{A}^\bullet(X)=\bigoplus_{p+q}\mathcal{A}^{p,q}_B(X)$$

where $\mathcal{A}^{p,q}_B$ ranges over $p$ $\Delta$-directions and over $q$ $\Delta^\perp$-directions. 

The $\Delta$-refined Tseng-Yau cohomology of $(X,\omega)$ is
$$ H^{p,q}_{B, TY}(X) := \frac{{\rm Ker} (d + d^\Lambda) \cap {\mathcal A}^{p,q}_B(X)}{{\rm Im}(d d^\Lambda) \cap {\mathcal A}_B^{p,q}(X)} $$

where $d^\Lambda = d\Lambda -\Lambda d$ and $\Lambda$ is the adjoint of the Lefschetz operator $L=\omega \wedge \cdot\,.$ 

We also recall that the (semi-flat) Bott-Chern cohomology of $(\cc X,\cc \Omega)$ is 
$$ H^{p,q}_{B, BC}(\cc X) := \frac{{\rm Ker}\, d \, \cap {\mathcal A}^{p,q}_B(\cc X)}{{\rm Im}(\partial \bar\partial) \cap {\mathcal A}_B^{p,q}(\cc X)} \,.$$

\begin{theorem}[Theorem 4.5 and Theorem 6.7 \cite{LTY}]
		Fourier-Mukai transform is an isomorphism of double complexes
	
	$$\Big(\mathcal{A}^{\bullet}_B(X,\mathbb{C}),\frac{(-1)^ni}{2}d,\frac{(-1)^ni}{2}d^\Lambda\Big)\simeq\Big(\mathcal{A}^{\bullet}_B(\check X,\mathbb{C}),\bar\partial,\partial\Big)$$
	and at level of cohomologies gives 
	\begin{equation}\label{mirrornumbers}
		H^{n-p,q}_{B,TY}(X,\mathbb{C})\simeq H^{p,q}_{B,BC}(\check X)\,.
	\end{equation}
\end{theorem}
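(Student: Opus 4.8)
The plan is to verify the statement by direct computation in action-angle coordinates, reducing the transform to a purely algebraic operation on fibre-degrees and then checking the intertwining of differentials on a basis of semi-flat monomials. First I would fix action-angle coordinates $(r_k,\theta_k)$ on $X$ and the dual coordinates $(r_k,\cc\theta_k)$ on $\cc X$ with $z_k=\cc\theta_k+ir_k$, and write an arbitrary semi-flat form as $\check\phi=\sum_{I,J}a_{IJ}(r)\,dz_I\w d\bar z_J$. Because every coefficient depends only on the base $B$, the push-forwards in the definition of $\mathrm{FT}$ are ordinary torus integrations that simply extract the top fibre-degree, so the whole transform becomes combinatorial.

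Second, I would evaluate $\mathrm{FT}$ on a single monomial. Using $\exp(F/2i)=\prod_{k=1}^{n}\big(1+d\cc\theta_k\w d\theta_k\big)$ together with the polarization switch $\mathcal P\cdot(a\,dz_I\w d\bar z_J)=a\,d\cc\theta_I\w dr_J$, integration over the $\cc\theta$-torus selects the coefficient of $d\cc\theta_1\w\cdots\w d\cc\theta_n$; the factors $d\cc\theta_k\w d\theta_k$ with $k\notin I$ are exactly those needed to complete this top form, and each leaves behind a $d\theta_k$. Hence
\[
\mathrm{FT}\cdot\big(a(r)\,dz_I\w d\bar z_J\big)=\pm\,a(r)\,d\theta_{I^c}\w dr_J,
\]
with $I^c$ the complement of $I$. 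This exhibits $\mathrm{FT}$ as a bijection on monomials sending complex bidegree $(p,q)=(|I|,|J|)$ to the refined symplectic bidegree $(n-p,q)$, which already accounts for the index shift $n-p$ in \eqref{mirrornumbers}; computing the inverse transform identically confirms that the two maps are mutually inverse.

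Third, and this is the heart of the argument, I would check the two operator identities
\[
\mathrm{FT}\circ\Big(\tfrac{(-1)^ni}{2}\,d\Big)=\bar\partial\circ\mathrm{FT},
\qquad
\mathrm{FT}\circ\Big(\tfrac{(-1)^ni}{2}\,d^\Lambda\Big)=\partial\circ\mathrm{FT}.
\]
On semi-flat forms $d$ only differentiates coefficients, $d(a\,d\theta_K\w dr_L)=\sum_m\partial_{r_m}a\,dr_m\w d\theta_K\w dr_L$, while on the dual side $\bar\partial$ adjoins a $d\bar z_m$ with $\partial_{\bar z_m}a=\tfrac{i}{2}\partial_{r_m}a$; matching these through the monomial formula pins down the scalar $\tfrac{(-1)^ni}{2}$ up to the signs produced by reordering $dr_m$ past the form. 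The genuine obstacle is the second identity: it requires writing $d^\Lambda=d\Lambda-\Lambda d$ explicitly, with $\Lambda$ the symplectic adjoint of $L=\omega\w\cdot$ computed against the semi-flat $\omega=\sum_k dr_k\w d\theta_k$, and then carefully tracking the orientation and sign conventions of the fibre integration so that the contraction defining $\Lambda$ reproduces the holomorphic derivative $\partial_{z_m}a=\tfrac{1}{2i}\partial_{r_m}a$ coming from $\partial$. Keeping these signs consistent is where the main work lies.

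Finally, granting the two identities, $\mathrm{FT}$ is an isomorphism of the two double complexes, and descending to cohomology is formal. In the refined bigrading $d$ raises the $\Delta^\perp$-index while $d^\Lambda$ lowers the $\Delta$-index, so on homogeneous elements $d\phi$ and $d^\Lambda\phi$ live in distinct summands and $\ker(d+d^\Lambda)\cap\mathcal A^{n-p,q}_B(X)=\ker d\cap\ker d^\Lambda\cap\mathcal A^{n-p,q}_B(X)$; under $\mathrm{FT}$ this corresponds to $\ker\bar\partial\cap\ker\partial=\ker d$ on $\cc X$, the numerator of the Bott--Chern group. Likewise $(\tfrac{(-1)^ni}{2})^2 dd^\Lambda$ corresponds to $\bar\partial\partial=-\partial\bar\partial$, so $\mathrm{im}(dd^\Lambda)$ matches $\mathrm{im}(\partial\bar\partial)$ up to a nonzero scalar and the denominators agree. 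Combining this with the bidegree shift $(n-p,q)\leftrightarrow(p,q)$ from the monomial computation yields the claimed isomorphism $H^{n-p,q}_{B,TY}(X,\C)\simeq H^{p,q}_{B,BC}(\cc X)$.
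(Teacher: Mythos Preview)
The paper does not give its own proof of this statement: it is quoted verbatim as ``Theorem 4.5 and Theorem 6.7 \cite{LTY}'' and used as a black box from the Lau--Tseng--Yau paper, with no argument supplied here. So there is nothing in this paper to compare your proposal against.

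That said, your sketch is the natural route and is, as far as I can tell, the same line of reasoning followed in \cite{LTY}: compute $\mathrm{FT}$ on semi-flat monomials using the exponential of the curvature and fibre integration, observe the bidegree shift $(p,q)\mapsto(n-p,q)$, and then verify the two intertwining identities between $(d,d^\Lambda)$ and $(\bar\partial,\partial)$ by differentiating coefficients that depend only on $r$. Your own caveat is accurate: the entire content of the proof is the sign and constant bookkeeping in the second identity, which you have not actually carried out. Until that is done your proposal is a correct outline rather than a proof, but it is the right outline, and the cohomological consequence you draw at the end (matching $\ker(d+d^\Lambda)$ with $\ker d$ and $\mathrm{im}(dd^\Lambda)$ with $\mathrm{im}(\partial\bar\partial)$ via the bigrading) is exactly how one passes from the double-complex isomorphism to \eqref{mirrornumbers}.
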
 

%
\begin{theorem}[Theorem 5.1 \cite{LTY}]
	\label{LTYth2}
	Let $(X,\omega)$ and $(\check X,\check \Omega)$ be a $3$-dimensional semi-flat mirror pair. Let $\check\omega$ be a real $(1,1)$-form in $\mathcal{A}^{1,1}_B(\check X)$ and set $\Omega=FT(e^{2\check\omega})$. Then
	
	\begin{enumerate}
		\item The triple $(\check X,\check\omega,\check\Omega)$ defines a SU$(3)$-structure if and only if $(X,\omega,\Omega)$ defines a SU$(3)$-structure. 
		\item $(X,\omega,\Omega)$ is supersymmetric of type IIA if and only if $(\check X,\check\omega,\check\Omega)$ is supersymmetric of type IIB.
	\end{enumerate}
\end{theorem}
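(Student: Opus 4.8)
The plan is to reduce both statements to two ingredients: (i) a single explicit local computation of $\Omega=FT(e^{2\check\omega})$ in action--angle coordinates, and (ii) the double complex isomorphism of the preceding theorem, which for $n=3$ intertwines $\tfrac{-i}{2}d$ with $\bar\partial$ and $\tfrac{-i}{2}d^\Lambda$ with $\partial$. Throughout, $\omega=\sum_k dr_k\wedge d\theta_k$ is the fixed semi-flat symplectic form on $X$ and $\check\Omega$ is the fixed holomorphic volume form on $\check X$; both are automatically closed, since $d\omega=0$ by the Lagrangian bundle structure and $d\check\Omega=0$ because $\check\Omega$ is holomorphic for the semi-flat complex structure. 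Hence the only differential content of part (2) will turn out to be the single equivalence $d\,\mathrm{Re}\,\Omega=0\iff d(\check\omega^2)=0$.

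First I would carry out the computation of $\Omega$. Writing $\check\omega=\tfrac{i}{2}\sum_{j,k}h_{jk}\,dz_j\wedge d\bar z_k$ with $h$ Hermitian and $e^{2\check\omega}=\sum_{k=0}^{3}\tfrac{2^k}{k!}\check\omega^k$, each summand is of bidegree $(k,k)$, and by the degree behaviour of the Fourier--Mukai transform a $(p,q)$-form on $\check X$ is sent to a form of type $(3-p,q)$ in the $\Delta$/$\Delta^\perp$ (that is $\theta$/$r$) bigrading of $X$. Thus $\Omega=\sum_{k=0}^{3}\Omega_k$ is a $3$-form with $\Omega_k=FT\big(\tfrac{2^k}{k!}\check\omega^k\big)\in\mathcal A^{3-k,k}_B(X)$, consistent with the cohomological shift $H^{p,q}_{B,BC}(\check X)\simeq H^{3-p,q}_{B,TY}(X)$. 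The key structural output of this computation is the reality pattern: since $e^{F/2i}$ is real and $\mathcal P$ merely relabels differentials, the factor $i^{k}$ carried by the coefficients of $\check\omega^k$ survives, so $\Omega_k$ is real for $k$ even and purely imaginary for $k$ odd. Consequently $\mathrm{Re}\,\Omega=\Omega_0+\Omega_2=FT(1+2\check\omega^2)$ and $\mathrm{Im}\,\Omega=-i\,FT(2\check\omega+\tfrac43\check\omega^3)$. The same computation exhibits $\Omega$ as a decomposable form of the shape $\prod_k\big(d\theta_k+i\sum_j h_{kj}\,dr_j\big)$, from which one reads off the almost complex structure $J$ on $X$ in terms of $h$.

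For part (1) I would then argue pointwise. The explicit form of $\Omega$ shows it is decomposable exactly when $h$ is nondegenerate, and the induced $J$ makes the fixed $\omega=\sum_k dr_k\wedge d\theta_k$ a positive $(1,1)$-form if and only if $h$ is positive definite, i.e. if and only if $\check\omega$ is a genuine positive $(1,1)$-form on $\check X$. Finally the normalization $\Omega\wedge\bar\Omega=\tfrac{(-2i)^3}{3!}\omega^3$ is transported to $\check\Omega\wedge\overline{\check\Omega}=\tfrac{(-2i)^3}{3!}\check\omega^3$ by the Mukai-type pairing compatibility of $FT$: both sides are top forms whose proportionality constant is the same function of $\det h$. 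Since $FT$ is an isomorphism all three equivalences run in both directions, which gives that $(X,\omega,\Omega)$ is an ${\rm SU}(3)$-structure if and only if $(\check X,\check\omega,\check\Omega)$ is.

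For part (2), using $\mathrm{Re}\,\Omega=FT(1+2\check\omega^2)$ and the intertwining $\tfrac{-i}{2}d\circ FT=FT\circ\bar\partial$ I obtain
\[
\tfrac{-i}{2}\,d\,\mathrm{Re}\,\Omega=FT\big(\bar\partial(1+2\check\omega^2)\big)=2\,FT\big(\bar\partial(\check\omega^2)\big).
\]
As $FT$ is an isomorphism this yields $d\,\mathrm{Re}\,\Omega=0\iff\bar\partial(\check\omega^2)=0$; and since $\check\omega^2$ is real, $\partial(\check\omega^2)=\overline{\bar\partial(\check\omega^2)}$, so $\bar\partial(\check\omega^2)=0\iff d(\check\omega^2)=0$. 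Combined with the automatic vanishing of $d\omega$ and $d\check\Omega$, this gives type IIA on $X$ if and only if type IIB on $\check X$. I expect the main obstacle to lie entirely in the first step: pinning down the signs and powers of $i$ in the computation of $FT(e^{2\check\omega})$ so as to justify the reality identity $\mathrm{Re}\,\Omega=FT(1+2\check\omega^2)$ and the determinantal pairing identity, since once these are established both parts follow as short formal consequences of the double complex isomorphism.
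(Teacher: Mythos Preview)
The paper does not prove this statement: it is quoted from \cite{LTY} without argument. The only related computation appearing in the paper is the lemma immediately following it, which carries out your step~(i) in the special setting of the distinguished ${\rm SU}(3)$-structures built from a developing map; there the Hermitian matrix is the real symmetric matrix $S=\mathcal L^{-T}\mathcal L^{-1}$, and the product formula $\Omega=\bigwedge_k(d\theta_k+i\eta_k)$ with \emph{real} $\eta_k$ is obtained exactly as you describe. So there is no proof in the paper to compare your proposal against, beyond noting that your explicit Fourier--Mukai computation matches that lemma in its restricted context.

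On the substance of your argument: the reality/parity claim you flag as the ``main obstacle'' is in fact wrong in general, and this breaks your proof of part~(2). For a general semi-flat real $(1,1)$-form $\check\omega=\tfrac{i}{2}\sum h_{jk}\,dz_j\wedge d\bar z_k$ with $h$ Hermitian, write $h=a+ib$ with $a$ real symmetric and $b$ real skew. Then $\mathcal P\cdot(2\check\omega)=i\sum h_{jk}\,d\check\theta_j\wedge dr_k$ has real part $-\sum b_{jk}\,d\check\theta_j\wedge dr_k$; since $e^{F/2i}$ and the fibre integration $p_\ast$ are real operations, $\Omega_1=FT(2\check\omega)$ acquires a nonzero real part whenever $b\ne 0$. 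The polarization switch $\mathcal P$ replaces the conjugate pair $(dz_j,d\bar z_j)$ by the independent real pair $(d\check\theta_j,dr_j)$, so it does \emph{not} commute with complex conjugation, and reality of $\check\omega^k$ does not imply the parity you assert for $\Omega_k$. Hence $\mathrm{Re}\,\Omega\ne FT(1+2\check\omega^2)$ in general, and a single application of the intertwining relation to $2\check\omega^2$ does not give $d\,\mathrm{Re}\,\Omega=0\iff d(\check\omega^2)=0$. Note moreover that $d$ on semi-flat forms maps $\mathcal A^{p,q}_B(X)\to\mathcal A^{p,q+1}_B(X)$, so $d\,\mathrm{Re}\,\Omega=0$ splits by bidegree into two separate conditions, one in $\mathcal A^{2,2}_B$ and one in $\mathcal A^{1,3}_B$, both of which must be controlled. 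Your product formula $\Omega=\bigwedge_j\big(d\theta_j+i\sum_k h_{jk}\,dr_k\big)$ is correct and is the right starting point; the repair is to work directly with it (and to use the $d^\Lambda\leftrightarrow\partial$ intertwining as well), rather than via the clean but false split $\mathrm{Re}\,\Omega=\Omega_0+\Omega_2$.
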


Note that we stated the last Theorem in the $3$-dimensional case since in this paper we are not dealing with general type IIA and type IIB $\rm SU(n)$-structures.

\section{Known symplectic half-flat manifolds}

Few non-K\"ahler symplectic half-flat manifolds are known.
Main examples are nilmanifolds and solvmanifolds. 

The first nilpotent example was found in \cite{TesiB}, the case of nilpotent Lie algebras is considered in \cite{ContiTomassini}, while the case of solvable non-nilpotent Lie algebras is treated in \cite{FMOU}. Important contributions with explicit examples are \cite{deBaTomma1}, \cite{deBaTomma2} and \cite{TommaLuigi}.
As far as we know there is still no {\em classification} up to isomorphism of symplectic half flat structures on any solvable Lie algebra.

The following is the complete list of non-abelian unimodular solvable Lie algebras admitting invariant symplectic half flat structures.  
We also specify if the algebra is completely solvable. 

Following the usual convention we present a Lie algebra choosing a left-invariant coframe $e^1,\dots,e^6$ and listing their differential. As usual $e^{ij}$ stands for $e^{i} \wedge e^j$.

\bigskip

\begin{center}
	 \captionof{table}{Lie algebras with left-invariant type IIA structure \label{algebreIIA}}
\centering
\renewcommand{\arraystretch}{1.5}	
\scalebox{0.7}{
	\begin{tabular}{|c|l|l|l|r|}
		\hline
		 & Lie algebra & $\omega$ & $\Omega$ & c.s. \\
		\hline
		1 & $(0,0,0,0,e^{12},e^{13})$ & $e^{14}+e^{26}+e^{35}$ & $\big(e^1+ie^4\big)\w \big(e^2+ie^6\big)\w \big(e^3+ie^5\big)$ & yes\\
		\hline
		2 & $(0,0,0,e^{12},e^{13},e^{23})$ & $e^{61}+\lambda e^{52}+(1-\lambda)e^{34}$ & $\big(e^6+ie^1\big)\w \big(e^5+i\lambda e^2\big)\w \big(e^3+i(1-\lambda)e^4\big)$& yes \\
		\hline
		3 & $(0,-e^{13},-e^{12},0,-e^{46},-e^{45})$ & $e^{14}+e^{23}+e^{56}$ & $(1+i)\big(e^1+ie^4\big)\w \big(e^2+ie^3\big)\w \big(e^5+ie^6\big)$ & yes \\
		\hline
		4 & $(e^{15},-e^{25},-e^{35},e^{45},0,0)$ & $e^{31}+e^{24}+e^{56}$ & $\big(e^3+ie^1\big)\w \big(e^2+ie^4\big)\w \big(e^5+ie^6\big)$ & yes \\
		\hline
		5 & $(\alpha e^{15}+e^{25},- e^{15}+\alpha e^{25},-\alpha e^{35}+e^{45},-e^{35}-\alpha e^{45},0,0)$ &$e^{13}+e^{24}+e^{56}$ & $\big(e^3+ie^1\big)\w \big(e^2+ie^4\big)\w \big(e^5+ie^6\big)$ & no \\
		\hline
		6 & $(e^{23},- e^{36},e^{26},e^{26}-e^{56},e^{36}+e^{46},0)$ & $-2e^{16}+e^{34}-e^{25}$ & $\big(-2e^1+ie^6\big)\w \big(e^3+ie^4\big)\w \big(e^5+ie^2\big)$ & no \\
		\hline
		7 & $(e^{16}+e^{35},- e^{26}+ e^{45},e^{36},-e^{46},0,0)$ &$e^{14}+e^{23}+e^{56}$ & $\big(e^1+ie^4\big)\w \big(e^2+ie^3\big)\w \big(e^5+ie^6\big)$ & yes \\
		\hline
		8 & $(-e^{16}+e^{25},-e^{15}- e^{26},e^{36}-e^{45},e^{35}+ e^{46},0,0)$ &$e^{14}+e^{23}+e^{65}$ & $\big(e^1+ie^4\big)\w \big(e^2+ie^3\big)\w \big(e^6+ie^5\big)$ & no \\
		\hline
	\end{tabular}   
}
	\renewcommand{\arraystretch}{1}
\end{center}


More compact examples are provided by twistor spaces of compact 4-dimensional self-dual Einstein manifolds of negative scalar curvature, see
\cite{Xuthesis}.
For interesting non-compact examples see \cite{PodeRaf1, PodeRaf2}.

\section{Solvmanifolds, semi-direct products and equivariant dual torus bundles}
\label{construction}
Let $G$ be a simply connected $n$-dimensional Lie group endowed with a complete  left-invariant affine structure with developing map $D: G \to \R^n$. Without loss of generality we may assume $D(1_G)=0$. 

Note that such a group is necessarily {\em solvable}. Indeed left invariant complete affine structures on Lie groups correspond to simply transitive subgroups of  ${\rm Aff}(\R^n)$ (see \cite{FG}) and these are solvable (see \cite{Aus}).


Let us call $\alpha$ the faithful affine representation $\alpha: G \to {\rm Aff}(\R^n)$ given by $\alpha(g)=D \circ L_g \circ D^{-1}$ where $L_g: G \to G$ is the left-multiplication by $g$. Let $\rho: G \to {\rm GL}(n,\R)$ be its linear part. Of course $\rho$ need not be faithful.

Choosing a lattice (i.e.\ a co-compact discrete subgroup) $\Gamma \leqslant G$ whose left multiplications, read through $D$, are {\em integral} affine gives a well defined integral affine structure $\mathcal A$ to the set of right cosets $B = G/\Gamma$. Of course, for such a $\Gamma$ to exist, the group $G$ needs to be unimodular.
The holonomy representation of this structure is the restriction of $\alpha$ to $\Gamma$.

According to the construction defined in section \ref{affine} we thus have a well defined lattice $\Lambda^* \subset T^*B$, a Lagrangian torus fibration $X=T^*B / \Lambda^* \to B$ and its dual torus fibration $\cc X=TB / \Lambda \to B$.
We will work for simplicity on the latter.
Let $\pi: G \to G/\Gamma$ be the canonical projection.
Let us identify $B \times \R^n$ with $TB$ via the map $$(\Gamma h,v) \mapsto d(\pi \circ L_h \circ D^{-1})_{0}\,v\,.$$ Using this identification we define an action of $G \ltimes_{\rho} \R^n$ on $TB$ by
$$(g,y)(\Gamma h,v)=(\Gamma hg^{-1},y+\rho(g)v).$$
The lattice $\Lambda \subset TB$ is defined as follows:
$$\Lambda_{\Gamma h}=\Z\{d(\pi \circ D^{-1})_{D(h)} e_i: i=1,\ldots,n\}$$
Of course here $e_i$ is thought of as an element of $T_hG=T_h \R^n=\R^n$.
Note that the lattice is well defined exactly because $\rho(\gamma )\in {\rm GL}(n,\Z)$ for every $\gamma \in \Gamma$.

Now we claim that the previous action descends to the quotient $TB/\Lambda$.

In order to prove it we must show that for every $g \in G$, $y \in \R^n$, $h\in G$, $w \in T_{\Gamma h}B$ and $\lambda \in \Lambda_{\Gamma h}$ we have
$$(g,y)(w+\lambda)-(g,y)w \in \Lambda_{\Gamma hg^{-1}}.$$
Now let $m=(m_1,\ldots,m_n) \in \Z^n$:
\begin{eqnarray*} 
(g,y)(w+\lambda) & = & (g,y)(d(\pi \circ L_h \circ D^{-1})_{0}\, v + \sum_i m_i d(\pi\circ D^{-1})_{D(h)} e_i) \\
& = &  (g,y)(d(\pi \circ L_h\circ D^{-1})_0\, (v + \sum_i m_i (d(D \circ L_{h^{-1}}\circ D^{-1})_{D(h)} e_i)) \\
& = &  d(\pi \circ L_{hg^{-1}}\circ D^{-1})_{0} (y + \rho(g) v + \sum_i m_i d(\alpha(gh^{-1}))_{D(hg^{-1})} e_i) \\
& = &  d(\pi \circ L_{hg^{-1}}\circ D^{-1})_{0} (y + \rho(g) v + \sum_i m_i (d(D \circ L_{gh^{-1}}\circ D^{-1}))_{D(hg^{-1})} e_i) \\
& = &  (g,y)w + \sum_i m_i d(\pi \circ D^{-1})_{D(hg^{-1})} e_i \,.
\end{eqnarray*}
Finally we note that the action $G \ltimes_{\rho} \R^n \curvearrowright TB/\Lambda$ is clearly transitive, and the stabilizer at $(\Gamma,0)$ is exactly $\Gamma \ltimes_{\rho} \Z^n$.\\
Dualizing everything we obtain the following
\begin{theorem}
\label{main}
Let $X\to G/\Gamma$ be the torus bundle induced by the integral affine structure defined by the triple $(G, \Gamma, D)$.
Let $\check X\to G/\Gamma$ be its dual.  Then
\begin{enumerate}
\item The total space $X$ is acted on transitively by the semidirect product $G \ltimes_{\rho^*} (\R^n)^*$ with stabilizer $\Gamma \ltimes_{\rho^*} (\Z^n)^*$, where $\rho^*:G \to {\rm Aff}((\R^n)^*)$ is the dual representation induced by $\rho$. 
\item The total space $\check X$ is acted on transitively by the semidirect product  $G \ltimes_{\rho} \R^n$with stabilizer $\Gamma \ltimes_{\rho} \Z^n$.
\end{enumerate}
\end{theorem}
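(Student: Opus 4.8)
The plan is to establish part (2) directly — since part (1) follows by the symmetric construction on $T^*B$ with the dual representation $\rho^*$, and in fact is the formal dual of (2) — and to organize the argument around the two facts already assembled in the preceding paragraphs: that $G\ltimes_\rho\R^n$ acts on $TB$ by the formula $(g,y)(\Gamma h,v)=(\Gamma hg^{-1},y+\rho(g)v)$, and that this action preserves the lattice $\Lambda\subset TB$, hence descends to $\check X=TB/\Lambda$. The descended action is well defined by the displayed five-line computation, so what remains is exactly the two adjectives in the statement: \emph{transitivity} and the \emph{identification of the stabilizer}.

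For transitivity, I would first check that $G\ltimes_\rho\R^n$ already acts transitively on $TB$ before passing to the quotient. Given $(\Gamma h,v)$, the pair $(h^{-1},\, v)$ — or more precisely a representative lift — sends the basepoint $(\Gamma\cdot 1_G,0)=(\Gamma,0)$ to $(\Gamma h, v)$: indeed $(g,y)(\Gamma,0)=(\Gamma g^{-1}, y)$, so taking $g=h^{-1}$ and $y=v$ does it. (One must be slightly careful that $\Gamma h$ determines $h$ only up to left multiplication by $\Gamma$, but this is harmless since we are landing in $B=G/\Gamma$.) Transitivity on $TB$ descends immediately to transitivity on $TB/\Lambda$ because the quotient map $TB\to TB/\Lambda$ is surjective and $G\ltimes_\rho\R^n$-equivariant.

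For the stabilizer, I would compute the isotropy subgroup of the basepoint $(\Gamma,0)\in TB/\Lambda$. An element $(g,y)$ fixes the class of $(\Gamma,0)$ iff $(g,y)(\Gamma,0)=(\Gamma g^{-1}, y)$ lies in the same $\Lambda$-orbit as $(\Gamma,0)$, i.e.\ iff $\Gamma g^{-1}=\Gamma$ — forcing $g\in\Gamma$ — and then $y\in\Lambda_{\Gamma}$. By the defining formula $\Lambda_{\Gamma}=\Z\{d(\pi\circ D^{-1})_{D(1_G)}e_i\}=\Z\{d(\pi\circ D^{-1})_{0}e_i\}$, and under the chosen identification $B\times\R^n\cong TB$ (with $h=1_G$, so $L_h=\mathrm{id}$ and $D(1_G)=0$) this lattice is precisely the image of $\Z^n\subset\R^n$. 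Hence the stabilizer is $\Gamma\ltimes_\rho\Z^n$, the semidirect product being inherited from $G\ltimes_\rho\R^n$ since $\rho(\Gamma)\subset\mathrm{GL}(n,\Z)$ preserves $\Z^n$. Finally, part (1) is obtained by running the identical argument with $T^*B/\Lambda^*$ in place of $TB/\Lambda$: the fiberwise-dual lattice $\Lambda^*$ is generated by $dr_1,\dots,dr_n$, the natural action is twisted by the contragredient representation $\rho^*$, and the stabilizer computation goes through verbatim with $(\Z^n)^*$ replacing $\Z^n$.

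The main obstacle I anticipate is purely bookkeeping rather than conceptual: keeping the four identifications straight — $T_hG=\R^n$ via $D$, $B\times\R^n\cong TB$ via $(\Gamma h,v)\mapsto d(\pi\circ L_h\circ D^{-1})_0 v$, the descent to $TB/\Lambda$, and the dualization to $T^*B/\Lambda^*$ — so that ``$\Z^n$'' really is the lattice appearing in the stabilizer and not some $\rho$- or $D$-twisted copy of it. The place where this must be handled with genuine care is the claim $\Lambda_\Gamma=\Z^n$ under the basepoint identification; this is exactly where $D(1_G)=0$ (assumed without loss of generality at the outset) is used, and it is worth spelling out that at the basepoint the linearization $d(\pi\circ L_{1_G}\circ D^{-1})_0$ is the identity of $\R^n$ so that the abstract generators $d(\pi\circ D^{-1})_0 e_i$ coincide with the standard basis.
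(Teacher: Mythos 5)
Your proposal is correct and takes essentially the same route as the paper: there the theorem is obtained directly from the preceding construction of the $G\ltimes_\rho\R^n$-action on $TB/\Lambda$, with transitivity and the stabilizer identification asserted as clear and part (1) deduced by dualizing. You merely spell out those two remaining verifications, correctly using $D(1_G)=0$ and the basepoint identification $d(\pi\circ L_{1_G}\circ D^{-1})_0=d(\pi\circ D^{-1})_0$ to see that $\Lambda_\Gamma$ is the standard copy of $\Z^n$.
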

\begin{rem}
	\label{remGamma}
	{\rm
	From the argument preceding Theorem \ref{main} it is apparent that the same construction can be applied in the slightly more general case in which $\alpha_{|\Gamma}$ lies in the automorphism group of a lattice $\Xi$ in $\R^n$ conjugated to $\Z^n$. In this case the stabilizers mentioned in Theorem \ref{main} will be  $\Gamma \ltimes_{\rho^*} \Xi^*$ and  $\Gamma \ltimes_{\rho} \Xi$.
}
\end{rem}
\subsection{${\bf SU}(n)$-manifolds from affine structures}
\label{distinguished}
As above we assume that $G$ is a simply connected $n$-dimensional unimodular Lie group endowed with a complete  left-invariant affine structure with developing map $D: G \to \R^n$. 

Let us endow $T^*G$ with the Lie group structure induced by the identification $T^*G = G \ltimes_{\rho^*} (\R^n)^*$.
Let $\Gamma \leqslant G$ be a lattice whose left multiplications, read through $D$, are {\em integral} affine. This gives the coset space $B=G/\Gamma$ an integral affine structure.
Thus we have a well defined {\em fiberwise lattice} $\Lambda^* \subset T^*B$.

The canonical symplectic structure $\omega$ on $T^*G$ passes to the quotient $X=T^*B/\Lambda^*$ and the induced  projection $X \to B$ (that we are going to call again $\pi$) becomes a Lagrangian torus bundle.


In the previous section we proved that $X$ itself is a solvmanifold under the action of $T^*G = G \ltimes_{\rho^*} (\R^n)^*$. 
\begin{prop}
The canonical symplectic form $\omega$ on $T^*G=G \ltimes_{\rho^*} (\R^n)^*$ is left-invariant.
\end{prop}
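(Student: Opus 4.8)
The plan is to derive the left-invariance of $\omega$ from the naturality of the Liouville (tautological) $1$-form. Write $\omega=d\lambda$, where $\lambda$ is the tautological $1$-form on $T^*G$, and recall two standard facts: for any diffeomorphism $\phi\colon G\to G$ the cotangent lift $T^*\phi\colon T^*G\to T^*G$ satisfies $(T^*\phi)^*\lambda=\lambda$; and for a $1$-form $\sigma$ on $G$, the fibrewise translation $\tau_\sigma\colon T^*G\to T^*G$ by $\sigma$ satisfies $\tau_\sigma^*\lambda=\lambda+\pi^*\sigma$. Consequently $T^*\phi$ always preserves $\omega$, and $\tau_\sigma$ preserves $\omega$ whenever $\sigma$ is closed. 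It therefore suffices to exhibit every left translation $\ell_{(g,\xi)}$ of $G\ltimes_{\rho^*}(\R^n)^*$ as a composite of maps of these two types.

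First I would reduce to two generating families: from the semidirect-product law $(g,\xi)=(1_G,\xi)(g,0)$ one gets $\ell_{(g,\xi)}=\ell_{(1_G,\xi)}\circ\ell_{(g,0)}$, so it is enough to treat $\ell_{(g,0)}$ and $\ell_{(1_G,\xi)}$ separately. Next I would unwind the identification $T^*G\cong G\ltimes_{\rho^*}(\R^n)^*$: it is induced by the developing map, since $D\colon G\xrightarrow{\ \sim\ }\R^n$ gives $T^*G\cong T^*\R^n\cong G\times(\R^n)^*$ with group law transported from $T^*\R^n$ via the affine action $\alpha$. In this trivialization $\ell_{(g,0)}$ is the bundle automorphism covering $L_g\colon G\to G$ whose fibre part is the contragredient $\rho^*(g)=(\rho(g)^{-1})^*$; since $D\circ L_g=\alpha(g)\circ D$ and the cotangent lift of the affine map $\alpha(g)$ has precisely $(\rho(g)^{-1})^*$ as its fibre part, $\ell_{(g,0)}$ \emph{is} the cotangent lift of $L_g$, hence preserves $\omega$. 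On the other hand $\ell_{(1_G,\xi)}$ is the fibrewise translation by the constant covector $\xi$, i.e. $\ell_{(1_G,\xi)}=\tau_{\sigma_\xi}$ with $\sigma_\xi=D^*\xi=d\langle\xi,D\rangle$ an exact, hence closed, $1$-form on $G$; thus it preserves $\omega$ too.

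Combining the two cases gives $\ell_{(g,\xi)}^*\omega=\omega$ for every $(g,\xi)\in G\ltimes_{\rho^*}(\R^n)^*$, which is the assertion. Alternatively, one can bypass the naturality of $\lambda$ altogether: $D$ furnishes global Darboux coordinates $(r_1,\dots,r_n,p_1,\dots,p_n)$ on $T^*G\cong T^*\R^n$ with $\omega=\sum_i dr_i\w dp_i$, in which $\ell_{(g,\xi)}$ reads $r\mapsto\rho(g)r+b_g$ and $p\mapsto(\rho(g)^{-1})^{T}p+\xi$, and the identity $\rho(g)^{T}(\rho(g)^{-1})^{T}=I$ turns the invariance into a one-line computation. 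The step I expect to be the main obstacle is the bookkeeping in the middle paragraph: correctly matching the dual-representation convention entering the definition of $G\ltimes_{\rho^*}(\R^n)^*$ with the fibre part of the cotangent lift, so that $\ell_{(g,0)}$ is genuinely identified with $T^*L_g$ rather than with $T^*L_{g^{-1}}$ or a twisted variant.
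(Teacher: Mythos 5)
Your argument is correct, and it reaches the conclusion by a genuinely different route from the paper. The paper's proof is essentially the direct computation you sketch as an ``alternative'' at the end: it uses that the action-angle coordinates $r_1,\dots,r_n,\theta_1,\dots,\theta_n$ coming from the developing map become globally defined on the universal cover, writes $\omega=\sum_i d\theta_i\wedge dr_i$ there, and checks that the matrix of $L_h^*$ in the coframe $dr_1,\dots,dr_n,d\theta_1,\dots,d\theta_n$ is block diagonal of the form $\operatorname{diag}\big((A^T)^{-1},A\big)$, hence symplectic. Your primary argument instead factors $\ell_{(g,\xi)}=\ell_{(1_G,\xi)}\circ\ell_{(g,0)}$ and invokes the naturality of the Liouville form: $\ell_{(g,0)}$ is the cotangent lift of $L_g$ (read through $D$ it is the cotangent lift of the affine map $\alpha(g)$, whose fibre part is the contragredient $(\rho(g)^{-1})^*=\rho^*(g)$, which is exactly the fibre part prescribed by the semidirect-product law), while $\ell_{(1_G,\xi)}$ is fibre translation by the exact one-form $d\langle\xi,D\rangle$. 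The bookkeeping you flag as the main obstacle does work out in the paper's conventions: the fibre coordinate in the identification $T^*G\cong G\ltimes_{\rho^*}(\R^n)^*$ is the $D$-coordinate (action-coordinate) representation of the covector, so $\ell_{(1_G,\xi)}$ really is translation by a constant, hence closed, covector field, and $\ell_{(g,0)}$ really is $T^*L_g$ rather than a twisted variant. What your approach buys is a conceptual explanation---cotangent lifts preserve $\lambda$, fibre translations by closed forms preserve $d\lambda$---whereas the paper's matrix check is shorter and sets up the coframe computation that is reused almost verbatim in the subsequent proposition on the complex side.
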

\begin{proof}
The important fact is that the local action-angle coordinates $r_1, \dots, r_n, \theta_1,\dots,\theta_n$ coming from Arnol'\!d-Liouville theorem become globally defined functions once lifted to the universal cover $\tilde X$ which coincide with $G \ltimes_{\rho} \R^n$.\\
On $\tilde X$ globally $\omega = \sum_i d\theta_i \wedge dr_i$. \\
Let $h=(g,v) \in G \ltimes_{\rho^*} (\R^n)^*$. 
Let $\mathcal L_g$ be the $n \times n$ matrix representing $(L_g)_*$ in the global frame $\frac{\partial}{\partial r_1},\dots,\frac{\partial}{\partial r_n}$ of $G=\tilde B$.
Now from the definition of the group law on $G \ltimes_{\rho^*} (\R^n)^*$ one gets $L_h^*(dr_i) = \sum_j (\mathcal L_g)^{-1}_{ji} dr_j$ and $L_h^*(d\theta_i) = \sum_j (\mathcal L_g)_{ij} d\theta_j$.
From this the result immediately follows since the matrix representing $L_h^*$ w.r.t. $dr_1,\dots,dr_n,d\theta_1,\dots,d\theta_n$ is of the kind 
$
\begin{bmatrix} 
 (A^{T})^{-1}&  0\\  
0 & A
\end{bmatrix} 
$
hence symplectic.
\end{proof}


Dualizing each ingredient 
we get a left-invariant holomorphic structure on $TG = G \ltimes_{\rho} \R^n$ hence on 
$\check X=(G \ltimes_{\rho} \R^n)/(\Gamma \ltimes_{\rho} \Z^n)$.
\begin{prop}
The tangent bundle $TG=G \ltimes_{\rho} \R^n$ has a canonical left-invariant integrable complex structure.
\end{prop}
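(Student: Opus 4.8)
The plan is to exhibit the complex structure explicitly on the group $TG = G \ltimes_{\rho} \R^n$ by writing down a left-invariant coframe and showing that the resulting almost complex structure is integrable by checking that its holomorphic cotangent space is closed under $d$. Concretely, on the universal cover $\tilde X = G \ltimes_{\rho}\R^n$ we already have the globally defined functions $r_1,\dots,r_n$ (the action coordinates, which are just the components of the developing map $D$ pulled back to $G$) and the fiber coordinates $\cc\theta_1,\dots,\cc\theta_n$ on the $\R^n$ factor. As recalled in section \ref{affine}, the natural candidate is the complex structure for which $z_k = \cc\theta_k + i r_k$ are holomorphic coordinates, equivalently the one whose $(1,0)$-forms are spanned by $dz_k = d\cc\theta_k + i\, dr_k$. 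So the first step is to record this $J$ precisely as the dual of the construction in the previous Proposition.

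Next I would verify left-invariance. From the proof of the preceding Proposition we know that for $h=(g,v)\in G\ltimes_{\rho^*}(\R^n)^*$ the pullback $L_h^*$ acts on $dr_i$ by $(\mathcal L_g)^{-1}$ and on $d\theta_i$ by $\mathcal L_g$; dualizing, on $TG=G\ltimes_\rho\R^n$ the left translation by $h=(g,v)$ pulls back $dr_i$ by the transpose-inverse block and $d\cc\theta_i$ by the same matrix $\mathcal L_g$ (up to the additive shift coming from $v$, which disappears under $d$), so that the block matrix of $L_h^*$ on $(dr,d\cc\theta)$ has the form $\mathrm{diag}\big((\mathcal L_g^T)^{-1},\mathcal L_g\big)$ — wait, I should be careful here: the key point is simply that $L_h^*$ sends the span of the $dr_i$ to itself and the span of the $d\cc\theta_i$ to itself, with matching matrices, so $L_h^*$ commutes with the endomorphism that sends $d\cc\theta_i\mapsto -dr_i$ and $dr_i\mapsto d\cc\theta_i$. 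Hence $J$ is left-invariant. That this endomorphism squares to $-\mathrm{id}$ is immediate, and positivity/compatibility is not being claimed at this stage — only that we have a left-invariant almost complex structure on the group, descending to $\cc X$.

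For integrability I would use the Newlander–Nirenberg criterion in its dual form: $J$ is integrable iff $d$ preserves the ideal generated by the $(1,0)$-forms, i.e. iff $d(dz_k)\in\mathcal I^{1,0}$ for a left-invariant basis $dz_k=d\cc\theta_k+i\,dr_k$. Now $d(dr_k)=0$ because $r_k$ are globally defined functions (components of the developing map). And $d(d\cc\theta_k)$ is a left-invariant $2$-form on $G\ltimes_\rho\R^n$ whose structure is dictated by the semidirect product: differentiating the relation expressing how the $d\cc\theta_k$ transform shows that $d(d\cc\theta_k)$ is a combination of terms $dr_j\wedge d\cc\theta_\ell$ coming from the linear part $\rho$ acting on the fiber — more precisely $d\cc\theta_k$ fails to be closed only by terms involving the connection $1$-forms of $G$ paired against the $\cc\theta$'s, and each such term, after using $dz_\ell = d\cc\theta_\ell+i\,dr_\ell$, can be rewritten as $dz_\ell\wedge(\text{something})$ up to the purely real piece $dr_j\wedge dr_\ell$ which vanishes by antisymmetry after summing. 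Equivalently, and perhaps more cleanly, since $dz_k=d\cc\theta_k+i\,dr_k$ and both $d\cc\theta_k$ and $dr_k$ are built from the left-invariant coframe of $G$ and the trivial $\R^n$-directions via the semidirect product, one checks directly that $d(dz_k) = (\text{linear part of the structure equations})\wedge dz$ lies in the ideal. The honest obstacle — the one place where a genuine computation is needed — is exactly this last verification: confirming that the structure equations of the Lie algebra $\frak g \ltimes_{\rho}\R^n$, written in the coframe $\{dr_i, d\cc\theta_j\}$, have no $(0,2)$-component, i.e. no term of the form $d\bar z_i\wedge d\bar z_j$. This amounts to the statement that the only way a left-invariant complex structure of this ``$z=\theta+ir$'' type on a semidirect product can fail to be integrable is through $d^2=0$-type obstructions that are not present here because $d(dr_k)=0$; I would carry this out by a short explicit computation with the two blocks of the coframe, using that $d\omega=0$ already established symplectically and dualizing, or alternatively by invoking Theorem \ref{LTYth2}(1) together with the fact that the symplectic side is genuinely a Lagrangian torus bundle, which forces the dual complex structure to be integrable. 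I expect the direct structure-equation check to be the cleanest and would present that.
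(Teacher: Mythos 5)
Your construction of $J$ is the right one, but you make integrability much harder than it is, and the paper's proof shows why no computation is needed at all. Since $G$ is simply connected and the developing map $D:G\to\R^n$ is a global diffeomorphism, the functions $r_1,\dots,r_n,\check\theta_1,\dots,\check\theta_n$ are \emph{global coordinates} on $TG\cong\R^{2n}$, so $z_k=\check\theta_k+ir_k$ identifies $TG$ with $\C^n$ and the almost complex structure is literally the pullback of the standard integrable one; there is nothing to verify. Your own observation that $d(dr_k)=0$ because $r_k$ is a globally defined function applies verbatim to $\check\theta_k$, giving $d(dz_k)=0$ identically. The place where you go astray is in treating $d(d\check\theta_k)$ as a possibly nonzero left-invariant $2$-form ``dictated by the semidirect product'': you are conflating the coordinate differentials $d\check\theta_k$ (which are exact, hence closed) with the left-invariant coframe $\check e^k$ (which is generally not closed). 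The structure-equation check you defer to — that $\mathfrak g\ltimes_\rho\R^n$ has no $(0,2)$-component in this coframe — would indeed succeed, but it is exactly the step the paper's argument renders vacuous, and as written your proposal leaves it as a sketch resting on this confusion.

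On left-invariance your instinct is correct but the transformation law you first write down is not: you state that $L_h^*$ acts on $d\check\theta_i$ by $\mathcal L_g$ while acting on $dr_i$ by $(\mathcal L_g^T)^{-1}$, which would \emph{not} commute with $J$. The correct statement (and the whole point of passing from $T^*G$ to $TG$) is that on $G\ltimes_\rho\R^n$ the fiber coordinates transform by the \emph{same} matrix as the base coordinates, namely $L_h^*(d\check\theta_i)=\sum_j(\mathcal L_g)^{-1}_{ji}d\check\theta_j$, so that $L_h^*$ has block form $\mathrm{diag}\bigl((A^T)^{-1},(A^T)^{-1}\bigr)$ and is complex-linear. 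You recover by saying ``the key point is that the matrices match,'' which is right, but you should actually verify from the group law of $G\ltimes_\rho\R^n$ that they do — it is here, not in integrability, that the semidirect product structure enters. The paper additionally notes that this matrix lies in $\mathrm{SL}(n,\C)$ because complete affine structures on compact solvmanifolds are special, which is needed later for the left-invariance of the holomorphic volume form $\check\Omega$ but not for the complex structure itself.
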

\begin{proof}
On $TG$ we have the global coordinates $r_1,\dots,r_n,\check \theta_1,\dots,\check \theta_n$ where the $\check\theta_k$'s are the dual coordinates corresponding to the $\theta_k$'s. The coordinates $z_k=\check \theta_k+i r_k$, $k=1,\dots,n$ give $TG$ an integrable complex structure. Now we prove that the complex volume $\check \Omega = \bigwedge_{k=1}^n dz_k$ is left-invariant on $G \ltimes_{\rho} \R^n$.\\
Let $\check h=(g,\check v) \in G \ltimes_{\rho} \R^n$. 
From the definition of the group law on $G \ltimes_{\rho^*} \R^n$ one gets  $L_h^*(d\check\theta_i) = \sum_j (\mathcal L_g)^{-1}_{ji} d\check \theta_j$.
Thus the matrix representing $L_h^*$ w.r.t. $dr_1,\dots,dr_n,d\check\theta_1,\dots,d\check\theta_n$ is of the kind 
$
\begin{bmatrix} 
 (A^{T})^{-1}&  0\\  
0 &  (A^{T})^{-1}
\end{bmatrix} 
$
hence complex. Moreover this lies in SL$(n,\C)$ since every complete affine structure on a compact solvmanifold is in fact {\em special}, that is the linear part of the transition functions lies indeed in ${\rm SL}(n,\Z)$ (see \cite[Theorem 2]{FGH}). 
\end{proof}

The left-invariant symplectic structures on $X$ and the left-invariant complex structure on $\cc X$ defined above only depend on the affine structure and not on the choice of $D$.
Though the choice of the developing map $D$ allows us also to define distinguished left-invariant ${\rm SU}(n)$-structures on $X$ and $\cc X$. 

Let us start from the symplectic side $X$. 
For $i=1\dots,n$ define $e_i$ to be the global left-invariant 1-form on $T^*G = G \ltimes_{\rho^*} (\R^n)^*$ such that ${e_{k}}_{|e}={d\theta_k}_{|e}$ and ${e_{k+n}}_{|e}={dr_k}_{|e}$.
Note that in these coordinates the canonical symplectic form is $\omega = \sum_k e_{k} \wedge e_{n+k}$. Now set $\Omega:= \bigwedge_{k=1}^n (e_{k} + ie_{k+n})$. Clearly $(\omega,\Omega)$ defines a symplectic $\rm{SU}(n)$-structure on $X$.

The construction is analogous on the complex side.
For $i=1\dots,n$ define $\cc e_i$ to be the global left-invariant 1-form on $TG = G \ltimes_{\rho} \R^n$ such that ${\cc {e_k}}_{|e}={d\cc\theta_k}_{|e}$ and ${\cc e}_{k+n}{}_{|e}={dr_k}_{|e}$.
Note that in these coordinates the canonical complex $n$-form takes the expression  $\cc\Omega:= \bigwedge_{k=1}^n (\cc e_{k} + i\cc e_{k+n})$.\\
Now set $\cc \omega:= \sum_{k=1}^n (\cc e_{k} \wedge \cc e_{k+n})$. Clearly $(\cc \omega,\cc \Omega)$ defines a complex  $\rm{SU}(n)$-structure on $\cc X$.\\

In dimension 3 we have the following lemma that gives the link with Theorem \ref{LTYth2}.
\begin{lemma}
Let $(X,\omega)$ and $(\cc X, \cc\Omega)$ be the 3-dimensional semi-flat mirror pair induced by $(G,\Gamma,D)$. Let $\cc \omega$ be the $(1,1)$ form on $\cc X$ as above. Then the form $\Omega$ defined above is the Fourier-Mukai transform of $e^{2\cc \omega}$.
\end{lemma}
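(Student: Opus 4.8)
The plan is to compute the Fourier--Mukai transform $\mathrm{FT}\cdot e^{2\cc\omega}$ directly from the definition and to check it agrees with $\Omega=\bigwedge_{k=1}^3(e_k+ie_{k+3})$. Recall from the definition that $\mathrm{FT}\cdot\check\phi=p_\ast\big((\check p^\ast(\mathcal P\cdot\check\phi))\wedge\exp(F/2i)\big)$, where $F=2i\sum_i d\cc\theta_i\wedge d\theta_i$, so $\exp(F/2i)=\prod_{i=1}^3(1+d\cc\theta_i\wedge d\theta_i)$, and $p_\ast$ is fiber integration over the $\cc\theta$-torus. All the forms here are semi-flat, so I would do the entire computation on the universal cover, where the global coordinates $r_i,\theta_i,\cc\theta_i$ are available and the left-invariant coframe $e_k$ (resp. $\cc e_k$) coincides everywhere with $d\theta_k, dr_k$ (resp. $d\cc\theta_k, dr_k$).

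First I would record the explicit form of $\cc\omega$: by construction $\cc\omega=\sum_{k=1}^3 \cc e_k\wedge \cc e_{k+3}$, which on the cover is $\sum_k d\cc\theta_k\wedge dr_k$. Hence $2\cc\omega=\sum_k d\cc\theta_k\wedge 2dr_k$ and
$$ e^{2\cc\omega}=\prod_{k=1}^3\big(1+2\,d\cc\theta_k\wedge dr_k\big), $$
since the $d\cc\theta_k\wedge dr_k$ are a commuting family of $2$-forms squaring to zero. Next I apply the polarization switch $\mathcal P$, which replaces each $d\cc\theta_k$ by $d\theta_k$ (leaving the $dr_j$ untouched), so $\mathcal P\cdot e^{2\cc\omega}=\prod_k(1+2\,d\theta_k\wedge dr_k)$. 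Pulling back to the fiber product and wedging with $\exp(F/2i)=\prod_i(1+d\cc\theta_i\wedge d\theta_i)$ gives $\prod_k\big(1+2\,d\theta_k\wedge dr_k+d\cc\theta_k\wedge d\theta_k+\text{(degree }4)\big)$; the only way to pick up the full top form $d\cc\theta_1\wedge d\cc\theta_2\wedge d\cc\theta_3$ along the $\cc p$-fibers — which is what survives $p_\ast$ — is to take, from each factor $k$, a term containing $d\cc\theta_k$. Carrying this out factor by factor and integrating over the $\cc\theta$-fiber should leave exactly $\prod_{k=1}^3(d\theta_k+i\,dr_k)$ up to the normalization built into $F/2i$; tracking the constants (the $2$ in $e^{2\cc\omega}$, the $1/2i$ in the exponent, the $\wedge$-signs from reordering $d\cc\theta_k$ past the $d\theta_j, dr_j$) is where the bookkeeping lives. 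Comparing with $\Omega=\bigwedge_k(e_k+ie_{k+3})=\bigwedge_k(d\theta_k+i\,dr_k)$ on the cover, and noting both sides are left-invariant hence descend, finishes the proof.

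The one genuine subtlety — and the step I expect to be the main obstacle — is getting the signs and the factor of $i$ exactly right when expanding the triple product and performing the fiber integration: one must be careful that $\mathcal P$ is applied before wedging with $\exp(F/2i)$, that the orientation/normalization of $p_\ast$ (integration over the $\cc\theta$-fiber) is the one that makes $p_\ast(d\cc\theta_1\wedge d\cc\theta_2\wedge d\cc\theta_3\wedge \alpha)=\alpha$ for a base form $\alpha$, and that the cross terms $2\,d\theta_k\wedge dr_k$ and $d\cc\theta_k\wedge d\theta_k$ combine in each factor as $d\cc\theta_k\wedge(d\theta_k+i\,dr_k)$ after absorbing the scalar from $F/2i$. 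Once the coefficient in each of the three factors is pinned down to be precisely $(d\theta_k+i\,dr_k)$, the claim is immediate; I would present the factor-by-factor expansion compactly rather than multiplying everything out, since the structure $\prod_k(\cdots)$ makes the pattern transparent. Finally I would remark that, because every form in sight is left-invariant and semi-flat, it suffices to verify the identity at the identity coset, so the cover computation suffices and no further descent argument is needed beyond invoking the preceding propositions.
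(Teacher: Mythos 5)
There is a genuine gap, in fact two. First, your whole computation rests on the assertion that on the universal cover the left-invariant coframe coincides \emph{everywhere} with the coordinate coframe, so that $\cc\omega=\sum_k d\cc\theta_k\w dr_k$ and $\Omega=\bigwedge_k(d\theta_k+i\,dr_k)$. This holds only at the identity (or in the flat abelian case): away from it the left-invariant forms are twisted by the linear holonomy $\mathcal L_g$ --- for instance in the Heisenberg examples $\cc e^3=d\cc\theta_3-r_1\,d\cc\theta_2$ and $\cc e^6=dr_3-r_1\,dr_2$. The actual content of the lemma lives precisely in this twist: setting $S=\mathcal L^{-T}\mathcal L^{-1}$ and $\eta_k=\sum_j S_{kj}\,dr_j$, one has $\cc\omega=\sum_k d\cc\theta_k\w\eta_k$ and $\Omega=\bigwedge_k(d\theta_k+i\eta_k)$ (the latter identification uses that the affine structure is special, $\det\mathcal L=1$), and the Fourier--Mukai computation must carry the $\eta_k$ through. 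As written, your argument proves the statement only for the standard flat structure on $\R^3$.

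Second, you misread the polarization switch: $\mathcal P$ sends $dz_I\w d\bar z_J$ to $d\cc\theta_I\w dr_J$; it does not replace $d\cc\theta_k$ by $d\theta_k$, and it produces no $\theta$'s at all (those enter only through $e^{F/2i}$). To apply it you must first rewrite $2\cc\omega$ in complex coordinates, $2\cc\omega=i\sum_{j,l}S_{jl}\,dz_j\w d\bar z_l$, whence $\mathcal P\cdot e^{2\cc\omega}=e^{\,i\sum_k d\cc\theta_k\w\eta_k}$; the factor $i$ that you defer to ``bookkeeping'' arises exactly here, and it is what turns the exponent, after wedging with $e^{F/2i}=e^{\sum_k d\cc\theta_k\w d\theta_k}$, into $\sum_k d\cc\theta_k\w(d\theta_k+i\eta_k)$, so that fiber integration yields $\bigwedge_k(d\theta_k+i\eta_k)=\Omega$. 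Finally, the proposed reduction ``it suffices to check at the identity coset by left-invariance'' is not free either: the Fourier--Mukai transform is defined through action-angle coordinates, and you have not established its equivariance under the group action; the direct global computation in action-angle coordinates avoids this issue altogether.
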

\begin{proof} It is enough to express the two relevant  forms in action-angle coordinates. First set $S=\mathcal{L}^{-T}\mathcal{L}^{-1}$ and $\eta_j= \sum_k S_{jk} dr_k$. 
	
Let us also introduce the following basis of $(1,0)$-forms $\psi^k=\cc e^{k} +i\cc e^{k+3}$. They are related to the differential of complex coordinates via $\psi^k=\sum_{k=1}^3 \mathcal{L}^{-1}_{kj}dz_j=\sum_{k=1}^3 \mathcal{L}^{-1}_{kj}(d\cc\theta_j+idr_j)$
	Then we have
$$
\begin{aligned}
\cc\omega & =\frac{i}{2}\sum_{i=1}^3 \psi^{k\bar k}=\frac{i}{2}\sum_{k=1}^3\big(dz_k \w S_{kj}d\bar z_j\big)=\sum_{k=1}^3d\cc\theta_k\w\eta_k\\
\Omega & = \bigwedge_k (d\theta_k + i\eta_k)
\end{aligned}
$$

Now we can do the following straightforward computation
\begin{equation*}
	\begin{split}
		FT(e^{2\cc\omega}) & =p_\ast\Big(p^\ast\big(\mathcal{P}\cdot e^{2\cc\omega}\big)\w e^{\frac{F}{2i}}\Big)\\
		& =p_\ast\Big(e^{i\sum_{k=1}^3d\cc\theta_k\w\eta_k}\w e^{\sum_{k=1}^3d\cc\theta_k\w d\theta_k}\Big)\\
		&=p_\ast\Big(e^{\sum_{k=1}^3d\cc\theta_k\w(d\theta_k+i\eta_k)}\Big)\\
		&=p_\ast\Big(\bigwedge_{k=1}^3 d\cc\theta_k \w (d\theta_k+i \eta_k)\Big)\\
		&=\bigwedge_{k=1}^3\big(d\theta_k+i\eta_k\big)=\Omega\,.
	\end{split}
\end{equation*}
\end{proof}


\section{Affine 3-dimensional solvmanifolds}
\label{aff3}
Up to isomorphisms there are only four simply connected unimodular solvable Lie groups of dimension 3 (see \cite{Ausflows}):

\begin{itemize}
\item The abelian Lie group $(\R^3,+)$;

\item The 3-dimensional real Heisenberg group $\mathcal H_3(\R)$, that is the group of upper uni-triangular 3-by-3 real matrices.  

\item ${\rm Sol}_3=E(1,1)$: The group of rigid motions of the Minkowski plane.
Explicitly it is $\R^3$ with the product
$$(x,y,z)\star(x',y',z')=(x+e^zx',y+e^{-z}y',z+z')\,.$$
The group may also be seen as $\R \ltimes_\mu \R^2$ where $\mu(z)(x',y')=(e^zx',e^{-z}y')$.
This is {\em completely solvable}.

A matrix representation is the following 
$$
\begin{pmatrix}
e^{z} & 0 & 0 & x\\
0 & e^{-z} & 0 & y\\
0 & 0 & 1 & z \\
0 & 0 & 0 & 1
\end{pmatrix}\,.
$$
\item  $\widetilde{E(2)}$: The universal cover of the group of rigid motions of the Euclidean plane.
Explicitly it is $\R^3$ with the product
$$(x,y,z)\star(x',y',z')=(x+x'\cos z-y'\sin z,y+y'\cos z+x'\sin z,z+z')\,.$$
The group may also be seen as $\R \ltimes_\mu \R^2$ where $\mu(z)(x',y')=(x'\cos z-y'\sin z,y'\cos z+x'\sin z)$. This is {\em non completely solvable}.
\end{itemize}

The lattices of such solvable groups are classified up to conjugacy in \cite{Ausflows}.

Complete left-invariant affine structures on  3-dimensional simply connected unimodular solvable Lie groups are classified in \cite{FG}.

Now we present the complete left-invariant affine structures giving rise to left-invariant type IIA completely solvable solvmanifolds.

\subsection{$(\R^3,+)$}
The standard trivial affine structure of $\R^3$, together with the standard lattice $\Z^3 \subset \R^3$ gives rise, via the construction of section \ref{construction}, to the trivial flat SU$(3)$- structure on the 6-dimensional torus $T^6$.
However it is possible to twist the affine structure of $\R^3$ to get a non-trivial compact type IIA manifold.

Consider the affine structure $\mathcal{A}_{(\mathbb{R}^3,\bowtie)}$ given by the following developing map:
%

\begin{equation}
	\text{D}:\begin{pmatrix}
		x_1\\
		x_2\\
		x_3\\
	\end{pmatrix}\longmapsto \begin{pmatrix}
		x_1\\
		x_2\\
		x_3+x_1x_2
	\end{pmatrix}
\end{equation}

The corresponding representation $\alpha: (\R^3,+) \to {\rm Aff}(\R^3)$ is given by

\begin{equation}
\begin{split}
\alpha \begin{pmatrix}
		x_1\\
		x_2\\
		x_3\\
	\end{pmatrix} 
	\begin{pmatrix}
		v_1\\
		v_2\\
		v_3\\
	\end{pmatrix}=
	\begin{pmatrix}
              1 & 0 & 0 \\
              0 & 1 & 0\\
              x_2& x_1 & 1 \\
          \end{pmatrix}\begin{pmatrix}
          v_1\\
          v_2\\
          v_3\\
      \end{pmatrix}+\begin{pmatrix}
      x_1\\x_2\\x_3+x_1x_2\\
  \end{pmatrix}
\end{split}
\end{equation}

Choosing again the standard lattice $\Z^3 \subset \R^3$ we get the following affine holonomy of $T^3 = \R^3/\Z^3$:

\begin{equation}
\begin{split}
\alpha \begin{pmatrix}
		n_1\\
		n_2\\
		n_3\\
	\end{pmatrix} 
	\begin{pmatrix}
		v_1\\
		v_2\\
		v_3\\
	\end{pmatrix}=
	\begin{pmatrix}
              1 & 0 & 0 \\
              0 & 1 & 0\\
              n_2& n_1 & 1 \\
          \end{pmatrix}\begin{pmatrix}
          v_1\\
          v_2\\
          v_3\\
      \end{pmatrix}+\begin{pmatrix}
      n_1\\n_2\\n_3+n_1n_2\\
  \end{pmatrix}
\end{split}
\end{equation}

In the next section we will prove that this affine structure gives rise to the type IIA nilmanifold corresponding to the algebra $(0,0,0,0,e^{12},e^{13})$.

\subsection{$\mathcal H_3(\mathbb{R})$}
Consider first the developing map 

\begin{equation}
	\begin{split}
		 \text{D}:\mathcal H_3(\mathbb{R}) & \longrightarrow\mathbb{R}^3 \\
		\begin{pmatrix}
			1 & x_1 & x_3 \\
			0 & 1 & x_2 \\
			0 & 0 & 1 \\
		\end{pmatrix}
		&\longmapsto \begin{pmatrix}
			x_1\\
			x_2\\
			x_3\\
		\end{pmatrix}\\
	\end{split}    
\end{equation}

For $g=\begin{pmatrix}
	1 & x_1 & x_3\\
	0 & 1 & x_2\\
	0 & 0 & 1 \\
\end{pmatrix}$ and $v=\begin{pmatrix}v_1\\v_2\\v_3\\\end{pmatrix}\in\mathbb{R}^3$ we compute $\alpha=\text{D}\circ L_g \circ \text{D}^{-1}$:

\begin{equation}\label{rep}
	\begin{split}
		\alpha(g)(v) & = 
\begin{pmatrix}
          	1 & 0 & 0\\
          	0 & 1 & 0\\
          	0 & x_1 & 1\\
          \end{pmatrix}\begin{pmatrix}
          v_1\\
          v_2\\
          v_3\\
      \end{pmatrix}+\begin{pmatrix}
      x_1\\
      x_2\\
      x_3\\
  \end{pmatrix}\,.\\
	\end{split}
\end{equation}

Choosing the standard lattice $\mathcal H_3(\Z) \subset \mathcal H_3(\R)$ of matrices with integral entries we get the following affine holonomy of the Heisenberg manifold $\mathcal H_3(\R) / \mathcal H_3(\Z)$:

\begin{equation}
\begin{split}
\alpha \begin{pmatrix}
		 1 & n_1 & n_3 \\
              0 & 1 & n_2\\
              0 & 0 & 1 \\
	\end{pmatrix} 
	\begin{pmatrix}
		v_1\\
		v_2\\
		v_3\\
	\end{pmatrix}=
	\begin{pmatrix}
              1 & 0 & 0 \\
              0 & 1 & 0\\
              0 & n_1 & 1 \\
          \end{pmatrix}\begin{pmatrix}
          v_1\\
          v_2\\
          v_3\\
      \end{pmatrix}+\begin{pmatrix}
      n_1\\n_2\\n_3\\
  \end{pmatrix}\,.
\end{split}
\end{equation}

In the next section we will prove that this affine structure, denoted with $\mathcal{A}_{(\mathcal{H}_3(\mathbb{R}),0)}$, gives rise to the type IIA nilmanifold corresponding again to the algebra $(0,0,0,0,e^{12},e^{13})$, but with a choice of Lagrangian fibration different from the one obtained from the twisted affine structure on the torus. 
%
%
%

%
%


Consider now the following family of developing maps parametrised by $\lambda \in \R \setminus \{0,1\}$: 

\begin{equation}
	\begin{split}
		& D:\mathcal H_3(\mathbb{R})\longrightarrow\mathbb{R}^3 \\
		&\begin{pmatrix}
			1 & x_1 & x_3 \\
			0 & 1 & x_2 \\
			0 & 0 & 1 \\
		\end{pmatrix}\longmapsto \begin{pmatrix}
			x_1\\
			\lambda x_2\\
			(\lambda-1)x_3+x_1x_2\\
		\end{pmatrix}\\
	\end{split}    
\end{equation}  

%

In this case for the affine representation $\alpha$ we get:

\begin{equation}
		\alpha(g)(v) 
		=\begin{pmatrix}
			1 & 0 & 0\\
			0 & 1 & 0\\
			x_2 & x_1 & 1\\
		\end{pmatrix}
		\begin{pmatrix}
			v_1\\
			v_2\\
			v_3\\
		\end{pmatrix}+\begin{pmatrix}
			x_1\\
			\lambda x_2\\
			(\lambda-1)x_3+x_1x_2\\
		\end{pmatrix}
\end{equation}

Choosing again the standard lattice $\mathcal H_3(\Z) \subset\mathcal H_3(\R)$  we get the following affine holonomy of the Heisenberg manifold $\mathcal H_3(\R) / \mathcal H_3(\Z)$:

\begin{equation}
\begin{split}
\alpha \begin{pmatrix}
		 1 & n_1 & n_3 \\
              0 & 1 & n_2\\
              0 & 0 & 1 \\
	\end{pmatrix} 
	\begin{pmatrix}
		v_1\\
		v_2\\
		v_3\\
	\end{pmatrix}=
	\begin{pmatrix}
              1 & 0 & 0 \\
              0 & 1 & 0\\
              n_2& n_1 & 1 \\
          \end{pmatrix}\begin{pmatrix}
          v_1\\
          v_2\\
          v_3\\
      \end{pmatrix}+\begin{pmatrix}
      n_1\\\lambda n_2\\(\lambda-1)n_3+n_1n_2\\
  \end{pmatrix}
\end{split}
\end{equation}

In the next section we will prove that this family of affine structures, denoted with $\mathcal{A}_{(\mathcal{H}_3(\mathbb{R}),\lambda)}$,  gives rise to three inequivalent type IIA nilmanifolds, all of them with underlying algebra $(0,0,0,e^{12},e^{13},e^{23})$.

%
%
%
%
%
%
%
%

%
%
%
%

\subsection{$E(1,1)$}

Choose as developing map 
\begin{equation}
	\begin{split}
		& D:E(1,1)\longrightarrow\mathbb{R}^3 \\
		&\begin{pmatrix}
			e^{x_1} & 0 & 0 & x_2\\
			0 & e^{-x_1} & 0 & x_3 \\
			0 & 0 & 1 & x_1  \\
			0 & 0 & 0 & 1\\
		\end{pmatrix}\longmapsto \begin{pmatrix}
			x_1\\
			x_2\\
			x_3\\
		\end{pmatrix}\\
	\end{split}    
\end{equation}

%

Thus for $g=\begin{pmatrix}
	e^{x_1} & 0 & 0 & x_2\\
	0 & e^{-x_1} & 0 & x_3 \\
	0 & 0 & 1 & x_1  \\
	0 & 0 & 0 & 1\\
\end{pmatrix}$ and $v=\begin{pmatrix}v_1\\v_2\\v_3\\\end{pmatrix}\in\mathbb{R}^3$ we have

\begin{equation}\label{repe}
	\begin{split}
		\alpha(g)(v) & = 
		\begin{pmatrix}
			1 & 0 & 0\\
			0 & e^{x_1} & 0\\
			0 & 0 & e^{-x_1}\\
		\end{pmatrix}\begin{pmatrix}
			v_1\\
			v_2\\
			v_3\\
		\end{pmatrix}+\begin{pmatrix}
			x_1\\
			x_2\\
			x_3\\
		\end{pmatrix}\\
	\end{split}
\end{equation}


Let $t$ be a real number such that $e^t + e^{-t}$ is an integer bigger than 2.
We call $\Gamma_t$ the subgroup of $E(1,1)$ made by the elements of the form 

$$\gamma=\begin{pmatrix}
	e^{tn_1} & 0 & 0 & n_2+e^{t}n_3\\
	0 &e^{-tn_1}& 0 & n_2+e^{-t}n_3\\
	0 & 0 & 1 & tn_1\\
	0 & 0 & 0 & 1
\end{pmatrix}\,,$$

with $n_1,n_2,n_3\in\mathbb{Z}$. 
It is easy to verify that $\Gamma_t$ is a lattice of $E(1,1)$.
 If we compute again the integral affine representation $\alpha(\gamma)(v)$ we get 

$$\begin{pmatrix}
	1 & 0 & 0\\
	0 &e^{tn_1}& 0 \\
	0 & 0 & e^{-tn_1}
\end{pmatrix}$$
as linear part which does not lie in GL$(3,\mathbb{Z})$. Nevertheless it is conjugated to an element of GL$(3,\mathbb{Z})$ as the following identity shows

\begin{equation}
	\begin{pmatrix}
		1 & 0 & 0 \\
		0 & e^{t} & 0 \\
		0 & 0 & e^{-t} \\
	\end{pmatrix}=  \begin{pmatrix}
		1 & 0 & 0 \\
		0 & 1 & e^t \\
		0 & 1 & e^{-t} \\
	\end{pmatrix} \begin{pmatrix}
		1 & 0 & 0 \\
		0 & 0 & -1 \\
		0 & 1 & e^t + e^{-t} \\
	\end{pmatrix} \begin{pmatrix}
		1 & 0 & 0 \\
		0 & 1 & e^t \\
		0 & 1 & e^{-t} \\
	\end{pmatrix}^{-1}\,.
\end{equation}

%

Therefore, though the linear part has not integer entries, it represents an automorphism of the lattice $$\Xi_t=\Big\langle \begin{pmatrix}
	1 \\ 0 \\ 0 \\
\end{pmatrix},\begin{pmatrix}
0 \\ 1 \\ 1\\
\end{pmatrix}, \begin{pmatrix}
0 \\ e^t \\ e^{-t}\\
\end{pmatrix} \Big\rangle_{\mathbb{Z}}$$ inside $\mathbb{R}^3$. 
This is exactly the situation described in Remark \ref{remGamma}.

%
%
%
%
%
%
%

In the next section we will prove that this affine structure, denoted with $\mathcal{A}_{(E(1,1),0)}$, gives rise to the type IIA solvmanifold corresponding to the algebra $(15,-25,-35,45,0,0)$ (see \cite{FMOU}).

\subsubsection{Twisted developing map for $E(1,1)$}

Take now as developing map

\begin{equation}
	\begin{split}
		& D:E(1,1)\longrightarrow\mathbb{R}^3 \\
		&\begin{pmatrix}
			e^{x_1} & 0 & 0 & x_2\\
			0 & e^{-x_1} & 0 & x_3 \\
			0 & 0 & 1 & x_1  \\
			0 & 0 & 0 & 1\\
		\end{pmatrix}\longmapsto \begin{pmatrix}
			x_1+x_2x_3\\
			x_2\\
			x_3\\
		\end{pmatrix}\\
	\end{split}    
\end{equation}
%
%
Again we compute
\begin{equation}
	\begin{split}
		\alpha(g)(v)  = 
		\begin{pmatrix}
			1& x_3e^{x_1} & x_2e^{-x_1}\\
			0 & e^{x_1} & 0\\
			0 & 0 & e^{-x_1}\\
		\end{pmatrix}\begin{pmatrix}
			v_1\\
			v_2\\
			v_3\\
		\end{pmatrix}+\begin{pmatrix}
			x_1+x_2x_3\\
			x_2\\
		    x_3\\
		\end{pmatrix}\\
	\end{split}
\end{equation}

%

Take $\gamma\in\Gamma_t$. If we compute  $\alpha(\gamma)(v)$ we obtain as linear part

\begin{equation}
\begin{pmatrix}\label{intmat}
	1 & e^{tn_1}(n_2+e^{-t}n_3) & e^{-tn_1}(n_2+e^{t}n_3)\\
	0 &e^{tn_1}& 0 \\
	0 & 0 & e^{-tn_1}
\end{pmatrix}
\end{equation}

which, again, does not lie in GL$(3,\mathbb{Z})$. 
Nevertheless the following identities on the generators of $\Gamma_t$

%
%

\begin{equation}\label{reprel}
	\begin{split}
		& 	\begin{pmatrix}
			1 & 0 & 0 \\
			0 & e^{t} & 0 \\
			0 & 0 & e^{-t} \\
		\end{pmatrix}=  \begin{pmatrix}
			1 & 0 & 0 \\
			0 & 1 & e^t \\
			0 & 1 & e^{-t} \\
		\end{pmatrix} \begin{pmatrix}
			1 & 0 & 0 \\
			0 & 0 & -1 \\
			0 & 1 & e^t+e^{-t} \\
		\end{pmatrix} \begin{pmatrix}
			1 & 0 & 0 \\
			0 & 1 & e^t \\
			0 & 1 & e^{-t} \\
		\end{pmatrix}^{-1}\\
	& 	\begin{pmatrix}
		1 & 1 & 1 \\
		0 & 1 & 0 \\
		0 & 0 & 1 \\
	\end{pmatrix}=  \begin{pmatrix}
		1 & 0 & 0 \\
		0 & 1 & e^t \\
		0 & 1 & e^{-t} \\
	\end{pmatrix} \begin{pmatrix}
		1 & 2 & e^t+e^{-t} \\
		0 & 1 & 0 \\
		0 & 0 & 1 \\
	\end{pmatrix} \begin{pmatrix}
		1 & 0 & 0 \\
		0 & 1 & e^t \\
		0 & 1 & e^{-t} \\
	\end{pmatrix}^{-1} \\
&	\begin{pmatrix}
	1 & e^t & e^{-t} \\
	0 & 1 & 0 \\
	0 & 0 & 1 \\
\end{pmatrix}=  \begin{pmatrix}
	1 & 0 & 0 \\
	0 & 1 & e^t \\
	0 & 1 & e^{-t} \\
\end{pmatrix} \begin{pmatrix}
	1 & e^t+e^{-t} & e^{2t}+e^{-2t} \\
	0 & 1 & 0 \\
	0 & 0 & 1 \\
\end{pmatrix} \begin{pmatrix}
	1 & 0 & 0 \\
	0 & 1 & e^t \\
	0 & 1 & e^{-t} \\
\end{pmatrix}^{-1}\\
	\end{split}
\end{equation}

show that we can interpret the matrix (\ref{intmat}) as an automorphism of the lattice $\Xi_t$.

%
%
%
%

In the next section we will prove that this affine structure, denoted with $\mathcal{A}_{(E(1,1),\bowtie)}$, gives rise to the type IIA solvmanifold corresponding to the algebra $(16+35,-25+45,36,-46,0,0)$ (see \cite{FMOU}). 
%

\section{proof of Theorem 1: Semi-flat six-dimensional mirror pairs}

In this section we apply Theorem \ref{main} to build the six-dimensional Lie groups $G(\mathcal{A})=G\ltimes_{\rho^*}(\mathbb{R}^3)^*$ and $\cc G(\mathcal{A})= G\ltimes_\rho\mathbb{R}^3$  where $\mathcal A$ is one of the affine structures presented in section \ref{aff3}. We describe the group law and its Lie (co)algebra. Also we relate the algebras obtained with the ones from the various classifications.

Note that we recover all of the type IIA completely solvable Lie algebras listed in table \ref{algebreIIA} except case 3. The corresponding Lie group (which is isomorphic to $E(1,1) \times E(1,1)$) does not admit a semidirect product decomposition $G \ltimes \R^3$ giving rise to a {\em Lagrangian} fibration with respect to the relevant symplectic structure.

\subsection{Twisted affine structure of $\mathbb{T}^3$}

\subsubsection{$G(\mathcal{A}_{(\mathbb{R}^3,\bowtie)})$ }

The six-dimensional Lie group $G(\mathcal{A}_{(\mathbb{R}^3,\bowtie)})$ associated to the twisted affine structure of the abelian $\R^3$ is $\R^6$ with the multiplication

\[
\begin{split}
(x_1,x_2,x_3  & ,y_1,y_2,y_3)(x'_1,x'_2,x'_3,y'_1,y'_2,y'_3) = \\
	& (x_1+x_1',x_2+x_2',x_3+x_3',y_1+y_1'-x_2y_3',y_2+y_2'-x_1y_3',y_3+y_3')
\end{split}
\]






which gives the following basis of left-invariant 1-forms

\begin{equation}
	\begin{split}
		& e^1=dy_1+x_2dy_3 \ \ \ , \ \ \  e^2=dy_2+x_1dy_3 \ \ \ , \  \ e^3=dy_3 \\ 
		&  e^4=dx_1 \ \ \ , \ \ \  e^5=dx_2 \ \ \ , \ \ \  e^6=dx_3 
	\end{split}
\end{equation}

with

\begin{equation}
	\begin{split}		
		& de^1=-e^{35} \ \ \ , \ \ \ de^2= -e^{34}\ \ \ , \ \ \ de^3=0\\
		&de^4=0 \ \ \ , \ \ \ de^5=0 \ \ \ , \ \ \ de^6=0
	\end{split}
\end{equation}

The algebra obtained is isomorphic to $(0,0,0,0,12,13)$, see table \ref{algebreIIA}.

The action-angle coordinates are 
\begin{equation}
	\begin{cases}
		r_1=x_1\\
		r_2=x_2\\
		r_3=x_3+x_1x_2\\
	\end{cases} \ \ \ , \ \ \ \begin{cases}
		\theta_1=y_1\\
		\theta_2=y_2\\
		\theta_3=y_3\\
	\end{cases}
\end{equation}

In these coordinates the coframe of left-invariant 1-forms rewrites as

\begin{equation}
	\begin{split}
		& e^1=d\theta_1+r_2d\theta_3 \\
		& e^2=d\theta_2+r_1d\theta_3 \\
		& e^3=d\theta_3\\
		& e^4=dr_1\\
		& e^5=dr_2\\
		& e^6=dr_3-r_2dr_1-r_1dr_2
	\end{split}
\end{equation}

The induced left-invariant symplectic structure is $\omega=e^{14}+e^{25}+e^{36}=\sum_{i=1}^3d\theta_i\wedge dr_i$.

Now consider the distinguished 3-form $$\Omega=(e^1+ie^4)\w (e^2+ie^5)\w(e^3+ie^6)$$
induced by the choice of the developing map.

One easily verifies that $d \,\text{Re} \ \Omega=0$ and this indeed corresponds to case 1 in table \ref{algebreIIA}.


%
%
%
%
%
%

%


\subsubsection{$\cc G(\mathcal{A}_{(\mathbb{R}^3,\bowtie)})$ }
\label{twistedR3IIB}
The dual six-dimensional Lie group $\cc G(\mathcal{A}_{(\mathbb{R}^3,\bowtie)})$  associated to the twisted affine structure of the abelian $\R^3$ is $\R^6$ with the following multiplication

\[
\begin{split}
(x_1,x_2,x_3  & ,\cc y_1,\cc y_2,\cc y_3)(x'_1,x'_2,x'_3,\cc y'_1,\cc y'_2,\cc y'_3) = \\
	&(x_1+x_1',x_2+x_2',x_3+x_3',\cc y_1+\cc y_1',\cc y_2+\cc y_2',\cc y_3+\cc y_3'+x_2\cc y_1'+x_1\cc y_2')\\
\end{split}
\]






which gives the following basis of left-invariant 1-forms

\begin{equation}
	\begin{split}
	 		& \cc e^1=d\cc y_1 \ \ \ , \ \ \ \cc e^2=d\cc y_2 \ \ \ , \ \ \ \cc e^3=d\cc y_3-x_1d\cc y_2-x_2d\cc y_1\\
			&  \cc e^4=dx_1 \ \ \ , \ \ \  \cc e^5=dx_2 \ \ \ , \ \ \  \cc e^6=dx_3 
	\end{split}
\end{equation}
with differentials
\begin{equation}
	\begin{split}
		& d\cc e^1=0 \ \ \ , \ \ \ d\cc e^2=0 \ \ \ , \ \ \ d\cc e^3=\cc e^{24}+\cc e^{15}\\
			&d\cc e^4=0 \ \ \ , \ \ \ d\cc e^5=0 \ \ \ , \ \ \ d\cc e^6=0\\
	\end{split}
\end{equation}

The dual action-angle coordinates are 
\begin{equation}
	\begin{cases}
		r_1=x_1\\
		r_2=x_2\\
		r_3=x_3+x_1x_2\\
	\end{cases} \ \ \ , \ \ \ \begin{cases}
		\cc\theta_1=\cc y_1\\
		\cc\theta_2=\cc y_2\\
		\cc\theta_3=\cc y_3\\
	\end{cases}
\end{equation}

In these coordinates the coframe of left-invariant 1-forms rewrites as

\begin{equation}
	\begin{split}
		& \cc e^1=d\cc\theta_1\\
		& \cc e^2=d\cc\theta_2\\
		& \cc e^3=d\theta_3-r_2d\cc\theta_1-r_1d\cc\theta_2\\
		& \cc e^4=dr_1\\
		& \cc e^5=dr_2\\
		& \cc e^6=dr_3-r_2dr_1-r_1dr_2
	\end{split}
\end{equation}

The induced left-invariant complex structure is induced by $\cc\Omega=\big(\cc e^1+i\cc e^4\big)\w\big(\cc e^2+i\cc e^5\big)\w\big(\cc e^3+i\cc e^6\big)=\bigwedge_{k=1}^3\big(d\cc\theta_k+i dr_k\big)$.

Now consider the distinguished 2-form $$\cc\omega=\cc e^{14}+\cc e^{25}+\cc e^{36}$$

One easily verifies that $d \cc\omega^2=0$. The algebra obtained is listed as $\mathfrak{h}_3=(0,0,0,0,0,12+34)$ in table 1 of \cite{LUV}.

\subsection{Untwisted affine structure of $\mathcal{H}_3(\mathbb{R})$}

\subsubsection{$G(\mathcal{A}_{(\mathcal{H}_3(\mathbb{R}),0)})$ }
\label{untwistedH3IIA}
The six-dimensional Lie group $G(\mathcal{A}_{(\mathcal{H}_3(\mathbb{R}),0)})$ associated to the untwisted affine structure of the Heisenberg group $\mathcal{H}_3(\mathbb{R})$ is $\R^6$ with the following multiplication

\[
\begin{split}
	(x_1,x_2,x_3  & ,y_1,y_2,y_3)(x'_1,x'_2,x'_3,y'_1,y'_2,y'_3) = \\
	& (x_1+x_1',x_2+x_2',x_3+x_3'+x_1x_2',y_1+y_1',y_2+y_2'-x_1y_3',y_3+y_3')
\end{split}
\]

which gives the following basis of left-invariant 1-forms

\begin{equation}
	\begin{split}
		& e^1=dy_1 \ \ \ , \ \ \  e^2=dy_2+x_1dy_3 \ \ \ , \ \ \ e^3=dy_3 \\
			&  e^4=dx_1 \ \ \ , \ \ \  e^5=dx_2 \ \ \ , \ \ \  e^6=dx_3-x_1dx_2 \\ 
	\end{split}
\end{equation}

with

\begin{equation}
	\begin{split}
			& de^1=0 \ \ \ , \ \ \ de^2= -e^{34}\ \ \ , \ \ \ de^3=0\\
		&de^4=0 \ \ \ , \ \ \ de^5=0 \ \ \ , \ \ \ de^6=-e^{45}\,.\\
	\end{split}
\end{equation}

The algebra obtained is isomorphic to $(0,0,0,0,12,13)$, see table \ref{algebreIIA}.

The action-angle coordinates are 
\begin{equation}
	\begin{cases}
		r_1=x_1\\
		r_2=x_2\\
		r_3=x_3\\
	\end{cases} \ \ \ , \ \ \ \begin{cases}
		\theta_1=y_1\\
		\theta_2=y_2\\
		\theta_3=y_3\\
	\end{cases}
\end{equation}

In these coordinates the coframe of left-invariant 1-forms rewrites as

\begin{equation}
	\begin{split}
		& e^1=d\theta_1 \\
		& e^2=d\theta_2+r_1d\theta_3 \\
		& e^3=d\theta_3\\
		& e^4=dr_1\\
		& e^5=dr_2\\
		& e^6=dr_3-r_1dr_2
	\end{split}
\end{equation}

The induced left-invariant symplectic structure is $\omega=e^{14}+e^{25}+e^{36}=\sum_{i=1}^3d\theta_i\wedge dr_i$.

Now consider the distinguished 3-form $$\Omega=(e^1+ie^4)\w (e^2+ie^5)\w(e^3+ie^6)$$
induced by the choice of the developing map.

One easily verifies that $d\, \text{Re} \ \Omega=0$ and this again corresponds to case 1 in table \ref{algebreIIA}. 

As type IIA manifolds $G(\mathcal{A}_{(\mathcal{H}_3(\mathbb{R}),0)})$ and $G(\mathcal{A}_{(\mathbb{R}^3,\bowtie)})$ are equivariantly  isomorphic but the Lagrangian fibrations are different.
In other words the relevant compact six-dimensional type IIA manifold $X$ admits two inequivalent Lagrangian torus fibrations giving rise to two non-isomorphic semi-flat mirror pairs.
This is also reflected in the refined Tseng-Yau cohomology of the two cases.

We will see in the next subsection that the two mirror complex partners are even not diffeomorphic.

\subsubsection{$\cc G(\mathcal{A}_{(\mathcal{H}_3(\mathbb{R}),0)})$ }
\label{untwistedH3IIB}
The dual six-dimensional Lie group $\cc G(\mathcal{A}_{(\mathcal{H}_3(\mathbb{R}),0)})$  associated to the untwisted affine structure of $\mathcal H_3(\R)$ is $\R^6$ with the following multiplication

\[
\begin{split}
	(x_1,x_2,x_3  & ,\cc y_1,\cc y_2,\cc y_3)(x'_1,x'_2,x'_3,\cc y'_1,\cc y'_2,\cc y'_3) = \\
	&(x_1+x_1',x_2+x_2',x_3+x_3'+x_1x_2',\cc y_1+\cc y_1',\cc y_2+\cc y_2',\cc y_3+\cc y_3'+x_1\cc y_2')\\
\end{split}
\]

which gives the following basis of left-invariant 1-forms

\begin{equation}
	\begin{split}
		& \cc e^1=d\cc y_1 \ \ \ , \ \ \ \cc e^2=d\cc y_2 \ \ \ , \ \ \ \cc e^3=d\cc y_3-x_1d\cc y_2\\ 
		&  \cc e^4=dx_1 \ \ \ , \ \ \  \cc e^5=dx_2 \ \ \ , \ \ \  \cc e^6=dx_3-x_1dx_2 \\
	\end{split}
\end{equation}

with

\begin{equation}
	\begin{split}
		& d\cc e^1=0 \ \ \ , \ \ \ d\cc e^2=0 \ \ \ , \ \ \ d\cc e^3=\cc e^{24}\\
		&d\cc e^4=0 \ \ \ , \ \ \ d\cc e^5=0 \ \ \ , \ \ \ d\cc e^6=-\cc e^{45}\,.\\
	\end{split}
\end{equation}

The dual action-angle coordinates are 
\begin{equation}
	\begin{cases}
		r_1=x_1\\
		r_2=x_2\\
		r_3=x_3\\
	\end{cases} \ \ \ , \ \ \ \begin{cases}
		\cc\theta_1=\cc y_1\\
		\cc\theta_2=\cc y_2\\
		\cc\theta_3=\cc y_3\\
	\end{cases}
\end{equation}

In these coordinates the coframe of left-invariant 1-forms rewrites as

\begin{equation}
	\begin{split}
		& \cc e^1=d\cc\theta_1\\
		& \cc e^2=d\cc\theta_2\\
		& \cc e^3=d\cc\theta_3-r_1d\cc\theta_2\\
		& \cc e^4=dr_1\\
		& \cc e^5=dr_2\\
		& \cc e^6=dr_3-r_1dr_2
	\end{split}
\end{equation}

The left-invariant complex structure is induced by $\cc\Omega=\big(\cc e^1+i\cc e^4\big)\w\big(\cc e^2+i\cc e^5\big)\w\big(\cc e^3+i\cc e^6\big)=\bigwedge_{k=1}^3\big(d\cc\theta_k+i dr_k\big)$.

Now consider the distinguished 2-form $$\cc\omega=\cc e^{14}+\cc e^{25}+\cc e^{36}\,.$$

One easily verifies that $d \cc\omega^2=0$.  The algebra obtained is listed as $\mathfrak{h}_6=(0,0,0,0,12,13)$ in table 1 of \cite{LUV}.
\begin{rem}
{\em 	
The mirror pair arising from this affine structure is special in many respects. First it corresponds to the only known six-dimensional example as presented in \cite[section 7]{LTY}. Here we additionally recognize that the total spaces $T^*B/\Lambda^*$ and $TB/\Lambda$ are both diffeomorphic to the nilmanifold $G/\Gamma$ where $G$ is the group of matrices of the form 
\[
\begin{pmatrix}
	1 & x_1 & x_2 & x_3 & 0 & 0\\
	0 & 1   & x_4 & x_5 & 0 & 0\\
	0 & 0   & 1   & 0   & 0 & 0\\
	0 & 0   & 0   & 1   & 0 & 0\\
	0 & 0   & 0   & 0   & 1 & x_6\\
	0 & 0   & 0   & 0   & 0 & 1
\end{pmatrix}
\] 
and $\Gamma$ is the lattice given by the same matrices with integral entries.

Moreover as anticipated in subsection \ref{untwistedH3IIA} $X=G/\Gamma$ provides an example of a type IIA manifold having two inequivalent torus Lagrangian fibrations with two different IIB partners. 
}

\end{rem}

\subsection{Twisted affine structure of $\mathcal{H}_3(\mathbb{R})$}

\subsubsection{$G(\mathcal{A}_{(\mathcal{H}_3(\mathbb{R}),\bowtie,\lambda)})$ }

The six-dimensional Lie group $G(\mathcal{A}_{(\mathcal{H}_3(\mathbb{R}),\bowtie,\lambda)})$ associated to the twisted family of affine structures of the Heisenberg group $\mathcal{H}_3(\mathbb{R})$ is $\R^6$ with the following multiplication

\[
\begin{split}
	(x_1,x_2,x_3  & ,y_1,y_2,y_3)(x'_1,x'_2,x'_3,y'_1,y'_2,y'_3) = \\
	& (x_1+x_1',x_2+x_2',x_3+x_3'+x_1x_2',y_1+y_1'-x_2y_3',y_2+y_2'-x_1y_3',y_3+y_3')
\end{split}
\]

A basis of left-invariant 1-forms is given by

\begin{equation}
	\begin{split}
		& f^1=dy_1+x_2dy_3 \ \ \ , \ \ \  f^2=dy_2+x_1dy_3 \ \ \ , \ \ \ f^3=dy_3   \\
		& f^4=dx_1 \ \ \ , \ \ \  f^5=dx_2 \ \ \ , \ \ \  f^6=dx_3-x_1dx_2
	\end{split}
\end{equation}

with

\begin{equation}
	\begin{split}
		&df^1=-f^{35} \ \ \ , \ \ \ df^2= -f^{34}\ \ \ , \ \ \ df^3=0\\
		&df^4=0 \ \ \ , \ \ \ df^5=0 \ \ \ , \ \ \ df^6=-f^{45}\,. \\
	\end{split}
\end{equation}

The algebra obtained is isomorphic to $(0,0,0,12,13,23)$, case 2 in table  \ref{algebreIIA}.

The action-angle coordinates are 
\begin{equation}
	\begin{cases}
		r_1=x_1\\
		r_2=\lambda x_2\\
		r_3=(\lambda-1)x_3+x_1x_2\\
	\end{cases} \ \ \ , \ \ \ \begin{cases}
		\theta_1=y_1\\
		\theta_2=y_2\\
		\theta_3=y_3\\
	\end{cases}
\end{equation}

In these coordinates the coframe of left-invariant 1-forms rewrites as

\begin{equation}
	\begin{split}
		& f^1=d\theta_1+\frac{r_2}{\lambda}d\theta_3 \\
		& f^2=d\theta_2+r_1d\theta_3 \\
		& f^3=d\theta_3\\
		& f^4=dr_1\\
		& f^5=\frac{dr_2}{\lambda}\\
		& f^6=\frac{1}{\lambda-1}dr_3-\frac{r_2}{\lambda(\lambda-1)}dr_1-\frac{r_1}{\lambda-1}dr_2
	\end{split}
\end{equation}

The induced left-invariant symplectic structure is $\omega_\lambda=\sum_{i=1}^3d\theta_i\wedge dr_i=f^{14}+\lambda f^{25}+ (\lambda -1)f^{36}$.
Note that the frame $e_1,\dots,e_6$ of section \ref{distinguished} is given by
\begin{equation*}
	\begin{split}
		& e^1=f^1 \quad e^2=f^2 \quad e^3=f^3\\
		& e^4=f^4 \quad e^5=\lambda f^5 \quad e^6=(\lambda-1)f^6
	\end{split}
\end{equation*}

One easily checks that the distinguished 3-form $$\Omega_\lambda=(e^1+ie^4)\w (e^2+ie^5)\w(e^3+ie^6)$$
induced by the choice of the developing map has closed real part for every $\lambda \in \R \setminus \{0,1\}$.

This type IIA algebra indeed corresponds to case 2 in table \ref{algebreIIA} and appears for the first time in \cite{deBaTomma1}.
According to the value of the parameter $\lambda$ we obtain non-equivalent IIA algebras, see the discussion in remark \ref{remUgarte}.

\subsubsection{$\cc G(\mathcal{A}_{(\mathcal{H}_3(\mathbb{R}),\bowtie,\lambda)})$ }
\label{twistedH3IIB}
The dual six-dimensional Lie group $\cc G(\mathcal{A}_{(\mathcal{H}_3(\mathbb{R}),\bowtie,\lambda)})$  associated to the twisted family of affine structures of the Heisenberg group $\mathcal{H}_3(\mathbb{R})$ is $\R^6$ with the following multiplication

\[
\begin{split}
	(x_1,x_2,x_3  & ,\cc y_1,\cc y_2,\cc y_3)(x'_1,x'_2,x'_3,\cc y'_1,\cc y'_2,\cc y'_3) = \\
	&(x_1+x_1',x_2+x_2',x_3+x_3'+x_1x_2',\cc y_1+\cc y_1',\cc y_2+\cc y_2',\cc y_3+\cc y_3'+x_1\cc y_2'+x_2y_1')\\
\end{split}
\]

A basis of left-invariant 1-forms is given by

\begin{equation}
	\begin{split}
		&  \cc f^1=d\cc y_1 \ \ \ , \ \ \ \cc f^2=d\cc y_2 \ \ \ , \ \ \ \cc f^3=d\cc y_3-x_2d\cc y_1-x_1d\cc y_2  \\
		&\cc f^4=dx_1 \ \ \ , \ \ \  \cc f^5=dx_2 \ \ \ , \ \ \  \cc f^6=dx_3-x_1dx_2\\ 
	\end{split}
\end{equation}

with

\begin{equation}
	\begin{split}
		& d\cc f^1=0 \ \ \ , \ \ \ d\cc f^2=0 \ \ \ , \ \ \ d\cc f^3=\cc f^{24}+\cc f^{15}\\
		&d\cc f^4=0 \ \ \ , \ \ \ d\cc f^5=0 \ \ \ , \ \ \ d\cc f^6=-\cc e^{45}\\
	\end{split}
\end{equation}

The algebra obtained is isomorphic to $\mathfrak{h}_4=(0,0,0,0,12,14+23)$ in \cite{LUV}.

The dual action-angle coordinates are 
\begin{equation}
	\begin{cases}
		r_1=x_1\\
		r_2=\lambda x_2\\
		r_3=(\lambda-1)x_3+x_1x_2\\
	\end{cases} \ \ \ , \ \ \ \begin{cases}
		\cc\theta_1=\cc y_1\\
		\cc\theta_2=\cc y_2\\
		\cc\theta_3=\cc y_3\\
	\end{cases}
\end{equation}

In these coordinates the coframe of left-invariant 1-forms rewrites as

\begin{equation}
	\begin{split}
		& \cc f^1=d\cc\theta_1\\
		& \cc f^2=d\cc\theta_2\\
		& \cc f^3=d\cc\theta_3-\frac{r_2}{\lambda}d\cc\theta_1-r_1d\cc\theta_2\\
		& \cc f^4=dr_1\\
		& \cc f^5=\frac{dr_2}{\lambda}\\
		&\cc f^6=\frac{1}{\lambda-1}dr_3-\frac{r_2}{\lambda(\lambda-1)}dr_1-\frac{r_1}{\lambda-1}dr_2
	\end{split}
\end{equation}

The induced left-invariant complex structure is $\cc\Omega_\lambda=\bigwedge_{k=1}^3\big(d\cc\theta_k+i dr_k\big)=\big(\cc f^1+i\cc f^4\big)\w\big(\cc f^2+i\lambda \cc f^5\big)\w\big(\cc f^3+i (\lambda -1)\cc f^6\big)$.
Again the frame $\cc e_1,\dots,\cc e_6$ of section \ref{distinguished} is given by
\begin{equation}
	\begin{split}
		& \cc e^1=\cc f^1 \quad \cc e^2=\cc f^2 \quad \cc e^3=\cc f^3\\
		& \cc e^4=\cc f^4 \quad \cc e^5=\lambda \cc f^5 \quad \cc e^6=(\lambda-1)\cc f^6
	\end{split}
\end{equation}

Now consider the distinguished 2-form $$\cc\omega_\lambda=\cc e^{14}+\cc e^{25}+\cc e^{36}$$

One easily verifies that $d \cc\omega_\lambda^2=0$.

\begin{rem}
	\label{remUgarte}
	{\em According to the value of the parameter $\lambda$ we obtain non-equivalent IIB algebras. More precisely the Bott-Chern numbers distinguish 3 different cases:
		\begin{itemize} 
			\item $\lambda=-1$.	This type IIB algebra is missing in the classification of \cite{LUV}.
			\item $\lambda=\frac12,2$.
			\item $\lambda\neq -1,\frac12,2$.
		\end{itemize}
	}
\end{rem}

\subsection{Untwisted affine structure of $E(1,1)$}
\subsubsection{$G(\mathcal{A}_{E(1,1),0)})$ }
The six-dimensional Lie group $G(\mathcal{A}_{E(1,1),0)})$ associated to the untwisted affine structure of the group $E(1,1)$ is $\R^6$ with the following multiplication

\[
\begin{split}
	(x_1,x_2,x_3  & ,y_1,y_2,y_3)(x'_1,x'_2,x'_3,y'_1,y'_2,y'_3) = \\
	& (x_1+x_1',x_2+e^{x_1}x_2',x_3+e^{-x_1}x_3',y_1+y_1',y_2+e^{-x_1}y_2',y_3+e^{x_1}y_3')
\end{split}
\]

which gives the following basis of left-invariant 1-forms

\begin{equation}
	\begin{split}
		&  e^1=dy_1 \ \ \ , \ \ \  e^2=e^{x_1}dy_2 \ \ \ , \ \ \ e^3=e^{-x_1}dy_3  \\
		&  e^4=dx_1 \ \ \ , \ \ \  e^5=e^{-x_1}dx_2 \ \ \ , \ \ \  e^6=e^{x_1}dx_3 
	\end{split}
\end{equation}

with

\begin{equation}
	\begin{split}
		& de^1=0 \ \ \ , \ \ \ de^2=-e^{24}\ \ \ , \ \ \ de^3=e^{34}\\
		& de^4=0 \ \ \ , \ \ \ de^5=-e^{45} \ \ \ , \ \ \ de^6=e^{46}
	\end{split}
\end{equation}

The algebra obtained is isomorphic to $(15,-25,-35,45,0,0)$, see table \ref{algebreIIA}.

The action-angle coordinates are 
\begin{equation}
	\begin{cases}
		r_1=x_1\\
		r_2=x_2\\
		r_3=x_3\\
	\end{cases} \ \ \ , \ \ \ \begin{cases}
		\theta_1=y_1\\
		\theta_2=y_2\\
		\theta_3=y_3\\
	\end{cases}
\end{equation}

In these coordinates the coframe of left-invariant 1-forms rewrites as

\begin{equation}
	\begin{split}
		& e^1=d\theta_1 \\
		& e^2=e^{r_1}d\theta_2 \\
		& e^3=e^{-r_1}d\theta_3\\
		& e^4=dr_1\\
		& e^5=e^{-r_1}dr_2\\
		& e^6=e^{r_1}dr_3
	\end{split}
\end{equation}

The induced left-invariant symplectic structure is $\omega=e^{14}+e^{25}+e^{36}=\sum_{i=1}^3d\theta_i\wedge dr_i$.

Now consider the distinguished 3-form $$\Omega=(e^1+ie^4)\w (e^2+ie^5)\w(e^3+ie^6)$$
induced by the choice of the developing map.

One easily verifies that $d \text{Re} \ \Omega=0$ and this indeed corresponds to case 4 in table \ref{algebreIIA}

\subsubsection{$\cc G(\mathcal{A}_{(E(1,1),0)})$ }

The dual six-dimensional Lie group $\cc G(\mathcal{A}_{E(1,1),0)})$  associated to the untwisted affine structure of the completely solvable $E(1,1)$ is $\R^6$ with the following multiplication

\[
\begin{split}
	(x_1,x_2,x_3  & ,\cc y_1,\cc y_2,\cc y_3)(x'_1,x'_2,x'_3,\cc y'_1,\cc y'_2,\cc y'_3) = \\
	&(x_1+x_1',x_2+e^{x_1}x_2',x_3+e^{-x_1}x_3',\cc y_1+\cc y_1',\cc y_2+e^{x_1}\cc y_2',\cc y_3+e^{-x_1}\cc y_3')\\
\end{split}
\]

which gives the following basis of left-invariant 1-forms

\begin{equation}
	\begin{split}
		&  \cc e^1=d\cc y_1 \ \ \ , \ \ \ \cc e^2=e^{-x_1}d\cc y_2 \ \ \ , \ \ \ \cc e^3=e^{x_1}d\cc y_3\\
		& \cc e^4=dx_1 \ \ \ , \ \ \  \cc e^5=e^{-x_1}dx_2 \ \ \ , \ \ \  \cc e^6=e^{x_1}dx_3
	\end{split}
\end{equation}

with

\begin{equation}
	\begin{split}
		&d\cc e^1=0 \ \ \ , \ \ \ d\cc e^2=\cc e^{24} \ \ \ , \ \ \ d\cc e^3= -\cc e^{34}\\
		& d\cc e^4=0 \ \ \ , \ \ \ d\cc e^5=-e^{45} \ \ \ , \ \ \ d\cc e^6=\cc e^{46}\\
	\end{split}
\end{equation}

Note that the algebra obtained is again isomorphic to $(15,-25,-35,45,0,0)$.

The dual action-angle coordinates are 
\begin{equation}
	\begin{cases}
		r_1=x_1\\
		r_2=x_2\\
		r_3=x_3\\
	\end{cases} \ \ \ , \ \ \ \begin{cases}
		\cc\theta_1=\cc y_1\\
		\cc\theta_2=\cc y_2\\
		\cc\theta_3=\cc y_3\\
	\end{cases}
\end{equation}

In these coordinates the coframe of left-invariant 1-forms rewrites as

\begin{equation}
	\begin{split}
		& \cc e^1=d\cc\theta_1\\
		& \cc e^2=e^{-r_1}d\cc\theta_2\\
		& \cc e^3=e^{r_1}d\cc\theta_3\\
		& \cc e^4=dr_1\\
		& \cc e^5=e^{-r_1}dr_2\\
		& \cc e^6=e^{r_1}dr_3
	\end{split}
\end{equation}

The induced left-invariant complex structure is induced by $\cc\Omega=\big(\cc e^1+i\cc e^4\big)\w\big(\cc e^2+i\cc e^5\big)\w\big(\cc e^3+i\cc e^6\big)=\bigwedge_{k=1}^3\big(d\cc\theta_k+i dr_k\big)$.

Now consider the distinguished 2-form $$\cc\omega=\cc e^{14}+\cc e^{25}+\cc e^{36}$$

One easily verifies that $d \cc\omega^2=0$ and this case corresponds to $\mathfrak g_1$ in \cite[Theorem 2.8]{FOU}.

\subsection{Twisted affine structure of $E(1,1)$}

\subsubsection{$G(\mathcal{A}_{(E(1,1),\bowtie)})$ }

The six-dimensional Lie group $G(\mathcal{A}_{(E(1,1),\bowtie)})$ associated to the twisted affine structure of the group $E(1,1)$ is $\R^6$ with the following multiplication

\[
\begin{split}
	(x_1,x_2,x_3  & ,y_1,y_2,y_3)(x'_1,x'_2,x'_3,y'_1,y'_2,y'_3) = \\
	& (x_1+x_1',x_2+e^{x_1}x_2',x_3+e^{-x_1}x_3',y_1+y_1',y_2+e^{-x_1}y_2'-x_3y_1',y_3+e^{x_1}y_3'-x_2y_1')
\end{split}
\]

which gives the following basis of left-invariant 1-forms

\begin{equation}
	\begin{split}
		& e^1=dy_1 \ \ \ , \ \ \  e^2=e^{x_1}dy_2+x_3e^{x_1}dy_1 \ \ \ , \ \ \ e^3=e^{-x_1}dy_3+x_2e^{-x_1}dy_1  \\
		& e^4=dx_1 \ \ \ , \ \ \  e^5=e^{-x_1}dx_2 \ \ \ , \ \ \  e^6=e^{x_1}dx_3 
	\end{split}
\end{equation}

with

\begin{equation}
	\begin{split}
		&de^1=0 \ \ \ , \ \ \ de^2=-e^{24}-e^{16}\ \ \ , \ \ \ de^3=e^{34}-e^{15} \\
		& de^4=0 \ \ \ , \ \ \ de^5=-e^{45} \ \ \ , \ \ \ de^6=e^{46}\\
	\end{split}
\end{equation}

The algebra obtained is isomorphic to $(16+35,-26+45,36,-46,0,0)$, see table \ref{algebreIIA}.

The action-angle coordinates are 
\begin{equation}
	\begin{cases}
		r_1=x_1+x_2x_3\\
		r_2=x_2\\
		r_3=x_3\\
	\end{cases} \ \ \ , \ \ \ \begin{cases}
		\theta_1=y_1\\
		\theta_2=y_2\\
		\theta_3=y_3\\
	\end{cases}
\end{equation}

In these coordinates the coframe of left-invariant 1-forms rewrites as

\begin{equation}
	\begin{split}
		& e^1=d\theta_1 \\
		& e^2=e^{r_1-r_2r_3}d\theta_2+r_3e^{r_3-r_1r_2}d\theta_1 \\
		& e^3=e^{-r_1+r_2r_3}d\theta_3+r_2e^{-r_3+r_1r_2}d\theta_1\\
		& e^4=dr_1-r_2dr_1-r_1dr_2\\
		& e^5=e^{-r_1+r_2r_3}dr_2\\
		& e^6=e^{r_1-r_2r_3}dr_3
	\end{split}
\end{equation}

The induced left-invariant symplectic structure is $\omega=e^{14}+e^{25}+e^{36}=\sum_{i=1}^3d\theta_i\wedge dr_i$.

Now the distinguished 3-form $$\Omega=(e^1+ie^4)\w (e^2+ie^5)\w(e^3+ie^6)$$
induced by the choice of the developing map has closed real part. This type IIA algebra indeed corresponds to case 7 in table \ref{algebreIIA} and appears for the first time in \cite{TommaLuigi}.

\subsubsection{$\cc G(\mathcal{A}_{(E(1,1),\bowtie)})$ }

The dual six-dimensional Lie group $\cc G(\mathcal{A}_{(E(1,1),\bowtie)})$  associated to the twisted affine structure of the group $E(1,1)$ is $\R^6$ with the following multiplication

\[
\begin{split}
	(x_1,x_2,x_3  & ,\cc y_1,\cc y_2,\cc y_3)(x'_1,x'_2,x'_3,\cc y'_1,\cc y'_2,\cc y'_3) = \\
	&(x_1+x_1',x_2+e^{x_1}x_2',x_3+e^{-x_1}x_3',\cc y_1+\cc y_1'+x_3e^{x_1}y_2'+x_2e^{-x_1}y_3',\cc y_2+e^{x_1}\cc y_2',\cc y_3+e^{-x_1}\cc y_3')\\
\end{split}
\]

which gives the following basis of left-invariant 1-forms

\begin{equation}
	\begin{split}
		&  \cc e^1=d\cc y_1-x_3d\cc y_2-x_2d\cc y_3 \ \ \ , \ \ \ \cc e^2=e^{-x_1}d\cc y_2 \ \ \ , \ \ \ \cc e^3=e^{x_1}d\cc y_3 \\
		& \cc e^4=dx_1 \ \ \ , \ \ \  \cc e^5=e^{-x_1}dx_2 \ \ \ , \ \ \  \cc e^6=e^{x_1}dx_3\\ 
	\end{split}
\end{equation}

with

\begin{equation}
	\begin{split}
		& d\cc e^1=-\cc e^{35}-\cc e^{26} \ \ \ , \ \ \ d\cc e^2=\cc e^{24} \ \ \ , \ \ \ d\cc e^3=-\cc e^{34}\\
		&d\cc e^4=0 \ \ \ , \ \ \ d\cc e^5=-\cc e^{45} \ \ \ , \ \ \ d\cc e^6=\cc e^{46}\\
	\end{split}
\end{equation}


The dual action-angle coordinates are 
\begin{equation}
	\begin{cases}
		r_1=x_1+x_2x_3\\
		r_2=x_2\\
		r_3=x_3\\
	\end{cases} \ \ \ , \ \ \ \begin{cases}
		\cc\theta_1=\cc y_1\\
		\cc\theta_2=\cc y_2\\
		\cc\theta_3=\cc y_3\\
	\end{cases}
\end{equation}

In these coordinates the coframe of left-invariant 1-forms rewrites as

\begin{equation}
	\begin{split}
		& \cc e^1=d\cc\theta_1-r_3d\cc\theta_2-r_2d\cc\theta_3\\
		& \cc e^2=e^{-r_1+r_2r_3}d\cc\theta_2\\
		& \cc e^3=e^{r_1-r_2r_3}d\cc\theta_3\\
		& \cc e^4=dr_1-r_2dr_1-r_1dr_2\\
		& \cc e^5=e^{-r_1+r_2r_3}dr_2\\
		& \cc e^6=e^{r_1-r_2r_3}dr_3
	\end{split}
\end{equation}

The left-invariant complex structure is induced by $\cc\Omega=\big(\cc e^1+i\cc e^4\big)\w\big(\cc e^2+i\cc e^5\big)\w\big(\cc e^3+i\cc e^6\big)=\bigwedge_{k=1}^3\big(d\cc\theta_k+i dr_k\big)$.

Now consider the distinguished 2-form $$\cc\omega=\cc e^{14}+\cc e^{25}+\cc e^{36}$$

One easily verifies that $d \cc\omega^2=0$ and this case corresponds to $\mathfrak g_5$ in \cite[Theorem 2.8]{FOU}.

\newpage

\section{Table of mirror pairs}

\begin{center}
	\captionof{table}{Mirror Lie algebras \label{mirrortable}}

	\scalemath{0.58}{
	\renewcommand{\arraystretch}{2}	
		\begin{tabular}{|l|l|l|c|c|c|c|c|c|c|}
			\hline
			Aff. structure &Lie algebra & Mirror Lie algebra & $h_{TY}^{1,0}/h_{BC}^{2,0}$& $h_{TY}^{0,1}/h_{BC}^{3,1}$&$h_{TY}^{2,0}/h_{BC}^{1,0}$&$h_{TY}^{1,1}/h_{BC}^{2,1}$&$h_{TY}^{0,2}/h_{BC}^{3,2}$&$h_{TY}^{2,1}/h_{BC}^{1,1}$&$h_{TY}^{1,2}/h_{BC}^{3,2}$  \\
			\hline
			$\mathcal{A}_{(\mathbb{R}^3,\bowtie)}$&$(0,0,0,0,e^{12},e^{13})$&$(0,0,0,0,0,e^{12}+e^{34})$ & $1$ & $3$ & $2$ & $6$ & $3$ & $4$ & $7$ \\
			\hline
			$\mathcal{A}_{(\mathcal{H}_3(\mathbb{R}),0)}$&$(0,0,0,0,e^{12},e^{13})$& $(0,0,0,0,e^{12},e^{13})$ & $2$ & $2$ & $2$ & $6$ & $3$ & $5$ & $6$ \\
			\hline
			$\mathcal{A}_{(\mathcal{H}_3(\mathbb{R}),\bowtie,\lambda=-1)}$&$(0,0,0,e^{12},e^{13},e^{23})$ & $(0,0,0,0,e^{12},e^{14}+e^{23})$ &$1$ &$2$ &$2$ &$6$ &$3$ &$4$ &$7$ 	 \\
			\hline
			$\mathcal{A}_{(\mathcal{H}_3(\mathbb{R}),\bowtie,\lambda=\frac{1}{2},2)}$&$(0,0,0,e^{12},e^{13},e^{23})$ & $(0,0,0,0,e^{12},e^{14}+e^{23})$ &$1$ &$2$ &$2$ &$6$ &$3$ &$5$ &$6$ 	 \\
			\hline
			$\mathcal{A}_{(\mathcal{H}_3(\mathbb{R}),\bowtie,\lambda\neq -1,\frac{1}{2},2)}$&$(0,0,0,e^{12},e^{13},e^{23})$ & $(0,0,0,0,e^{12},e^{14}+e^{23})$ &$1$ &$2$ &$2$ &$6$ &$3$ &$4$ &$6$ 	 \\
			\hline
			$\mathcal{A}_{(E(1,1),0)}$&$(e^{15},-e^{25},-e^{35},e^{45},0,0)$  & $(e^{15},-e^{25},-e^{35},e^{45},0,0)$ & 	$1$ & $1$ & $1$ & $3$ & $1$ & $3$ & $3$ \\
			\hline
			$\mathcal{A}_{(E(1,1),\bowtie)}$&$(e^{16}+e^{35},- e^{26}+ e^{45},e^{36},-e^{46},0,0)$ & $(e^{24}+e^{35},e^{26},e^{36},-e^{46},-e^{56},0)$ & $1$ & $1$ & $0$ & $2$ & $1$ & $2$ & $1$\\
			\hline
		\end{tabular}   
}
	\renewcommand{\arraystretch}{1}  

\end{center}

In table \ref{mirrortable} we present all the mirror pairs of solvable Lie algebras constructed in the previous section and the dimension of their (refined) Tseng-Yau and Bott-Chern cohomology groups.
This computation is valid also for all the corresponding compact solvmanifolds except for the complex side of the pair arising from $\mathcal{A}_{(E(1,1),\bowtie)}$, see \cite{AK1,AK2} where also the complete solvability plays a role.

\end{document}